\DeclareSymbolFontAlphabet{\amsmathbb}{AMSb}%
\newcommand{\bse}{\begin{subequations}}
\newcommand{\ese}{\end{subequations}}
\newcommand{\im}{\imath}
\newcommand{\jm}{\jmath}
\newcommand{\dt}{\textnormal{d}t}
\newcommand{\ddt}{\dfrac{\textnormal{d}}{\textnormal{d}t}}
\newcommand{\dx}{\textnormal{d}x}
\newcommand{\dtau}{\textnormal{d}\tau}
\newcommand{\pt}{\partial_t}
\newcommand{\bT}{\mathbf{T}}
\newcommand{\bU}{\mathbf{U}}
\newcommand{\bR}{\mathbf{R}}
\newcommand{\bW}{\mathbf{W}}
\newcommand{\bP}{\mathbf{P}}
\newcommand{\bA}{\mathbf{A}}
\newcommand{\bL}{\mathbf{L}}
\newcommand{\bid}{\mathbf{I}}
\newcommand{\bK}{\mathbf{K}}
\newcommand{\bSig}{\boldsymbol{\Sigma}}
\newcommand{\bth}{\boldsymbol{\Theta}}
\newcommand{\bu}{\mathbf{u}}
\newcommand{\by}{\mathbf{y}}
\newcommand{\bv}{\mathbf{v}}
\newcommand{\bff}{\mathbf{f}}
\newcommand{\bw}{\mathbf{w}}
\newcommand{\bz}{\mathbf{z}}
\newcommand{\br}{\mathbf{r}}
\newcommand{\be}{\mathbf{e}}
\newcommand{\etab}{\boldsymbol{\eta}}
\newcommand{\btau}{\boldsymbol{\tau}}
\newcommand{\bep}{\boldsymbol{\varepsilon}}
\newcommand{\bsig}{\boldsymbol{\sigma}}
\DeclareMathOperator{\divr}{div}
\DeclareMathOperator{\tr}{tr}
\DeclareMathOperator{\spann}{span}
\DeclareMathOperator{\ess}{ess}
\newcommand{\Wcal}{\mathcal{W}}
\newcommand{\Ucal}{\mathcal{U}}
\newcommand{\Pcal}{\mathcal{P}}
\newcommand{\Tcal}{\mathcal{T}}
\newcommand{\Rcal}{\mathcal{R}}
\newcommand{\Acal}{\mathcal{A}}
\newcommand{\Scal}{\mathcal{S}}
\newcommand{\Bcal}{\mathcal{B}}
\newcommand{\Ccal}{\mathcal{C}}
\newcommand{\real}{{\amsmathbb R}}
\newcommand{\nat}{{\amsmathbb N}}
\newcommand{\norm}[1]{\left\lVert#1\right\rVert}
\newcommand{\tderiv}[1]{\dfrac{\textnormal{d}#1}{\textnormal{d}t}}
\newtheorem{theorem}{Theorem}[section]
\newtheorem{lemma}[theorem]{Lemma}
\newtheorem{remark}{Remark}[section]
\newtheorem{Hyp}{Hypothesis}
\newtheorem{assum}{Assumption}
\newcommand{\pushright}[1]{\ifmeasuring@#1\else\omit\hfill$\displaystyle#1$\fi\ignorespaces}
\newcommand{\pushleft}[1]{\ifmeasuring@#1\else\omit$\displaystyle#1$\hfill\fi\ignorespaces}
\begin{document}

\title{Well-posedness  of the fully coupled  quasi-static thermo-poro-elastic equations with nonlinear 
convective transport\thanks{This work forms part of Norwegian Research Council project 250223}}

\author{Mats Kirkes\ae{}ther Brun\footnotemark[2]\and
Elyes Ahmed\footnotemark[2]
\and Florin Adrian Radu\footnotemark[2] 
\and Jan Martin Nordbotten\footnotemark[2]\ \footnotemark[3]
}
\date{\today}
\maketitle

\renewcommand{\thefootnote}{\fnsymbol{footnote}}

\footnotetext[2]{Department of Mathematics, University of Bergen, P. O. Box 7800, N-5020 Bergen, Norway

\href{mailto:mats.brun@uib.no}{mats.brun@uib.no},
\href{mailto:elyes.ahmed@uib.no}{elyes.ahmed@uib.no},
\href{mailto:florin.florin.radu@math.uib.no}{florin.radu@math.uib.no},
\href{mailto:jan.nordbotten@uib.no}{jan.nordbotten@uib.no}
}
\footnotetext[3]{Princeton Environmental Institute, Princeton University, Princeton, N. J., USA.}
\renewcommand{\thefootnote}{\arabic{footnote}}

\numberwithin{equation}{section}

\begin{abstract}
This paper is concerned with the analysis of the quasi-static thermo-poroelastic model. This model is nonlinear and includes thermal effects compared to the classical quasi-static poroelastic model (also known as Biot's model). It consists  of a momentum balance equation, a mass balance equation, and an energy balance  equation, fully coupled and  nonlinear  due to a convective transport term in the energy balance equation. The aim of this article is  to investigate, in the context of mixed formulations,  the existence and  uniqueness of a weak solution to this model problem. The primary variables in these formulations are the fluid pressure, temperature and  elastic displacement as well as the Darcy flux, heat flux and total stress. The well-posedness of a linearized formulation is addressed first through the use of a Galerkin method and suitable \textit{a priori} estimates. This is used next to study the well-posedness of an iterative solution procedure for the full nonlinear problem. A convergence proof for this algorithm is then inferred by a contraction of successive difference functions of the iterates using suitable norms. 
\end{abstract}
\vspace{3mm}

\noindent{\bf Key words:} Quasi-static thermo-poro-elastic equations; nonlinear convective transport; porous media; Biot's model; mixed formulations; 
well-posedness; Galerkin's method; contraction mapping; a priori estimates; convergence analysis.


\pagestyle{myheadings} \thispagestyle{plain} \markboth{M. K. Brun}{Analysis of the quasi-static thermo-poro-elastic equations}
\section{Introduction}
The field of \emph{poroelasticity} is concerned with describing the interaction between viscous fluid flow and elastic solid deformation within a porous material, and goes back to the works of K. Terzhagi~\cite{terzaghi1944theoretical} and M. A. Biot~\cite{biot1941general, biot1972theory}. Porous materials are by definition solid materials comprising a great number of interconnected pores, typically at the order of micrometers, where the interconnectivity of the pores is sufficient to allow for fluid flow through the material. For this reason, porous materials are usually modeled at the continuum scale, such that the complex micro-structure needs not be explicitly accounted for in the modeling, but rather implicitly through so-called \emph{effective parameters} such as e.g. \emph{porosity} and \emph{permeability}. Porous materials are primarily associated with objects such as rocks and clays, but biological tissue, foams and paper products also fall within this category. Consequently, the field of \emph{poroelasticity} is of great importance in a range of different engineering disciplines, such as petroleum engineering, agricultural science and biomedicine, among others. A number of comprehensive text books related to the field exists; see e.g.~\cite{coussy2004poromechanics, detournay1995fundamentals, wang2017theory}.

Mathematical modeling of fluid saturated deformable porous media on the continuum scale relies on the theory of linear elasticity, adapted to porous materials by using the so-called \emph{total stress tensor} instead of the Cauchy stress in the momentum balance equation. In particular, the total stress tensor is a linear combination of the Cauchy stress for the empty elastic skeleton and the isotropic stress coming from the fluid, i.e. the pore pressure. Within the quasi-static framework inertial terms are ignored, thus giving a purely elliptic equation for the momentum balance. A second equation of parabolic type accounts for the mass balance as fluid is displaced by the deformation of the solid, and relates change in porosity to volumetric fluid flow, i.e. the \emph{Darcy flux}. This is essentially Biot's poroelastic model for quasi-static deformation (see e.g.~\cite{biot1941general, coussy2004poromechanics}). There is an extensive literature on this model problem and on its numerical approximation. To mention a few, the well-posedness based on the canonical two-field formulation with displacement and pressure as variables was carried out in~\cite{showalter2000diffusion}, while three and four-field formulations have also been analyzed (taking Darcy flux and total stress as independent variables), and can be found in several studies, e.g.~\cite{ahmed:hal-01687026, phillips2008coupling, yi2014convergence}. A key feature of this model, one which greatly facilitates the analysis, is the symmetric coupling between the equations. 

In many important applications, such as geothermal energy extraction, nuclear waste disposal and carbon storage, temperature also plays a vital role and must therefore be included in the modeling. Using the method of formal two-scale expansions (see e.g.~\cite{cioranescu2000introduction, hornung2012homogenization} for a detailed review of this method), a thermo-poroelastic model was derived in~\cite{brun2018thporo}, which accounts for fluid pressure, elastic displacement, and temperature distribution within a fine-grained, fully saturated poroelastic material within the framework of quasi-static deformation. This model is similar to other thermo-poroelastic models which exists in the literature; see e.g. \cite{coussy2004poromechanics, gatmiri1997formulation, lee1997thermal, suvorov2011macroscopic, van2017thermoporoelasticity}, although there are also some notable differences among these works, in particular from the modeling point of view; i.e. allowable flow rates and deformation, choice of coordinate frames etc. (see~\cite{brun2018thporo, van2017thermoporoelasticity} for a comparison of existing thermo-poroelastic models). However, from the point of view of analysis the important factor is the coupling structure between the equations, and the model we analyze exhibits a fully coupled structure.

The aim of the present work is to establish the well-posedness of the nonlinear thermo-poroelastic model previously derived, where we also provide \textit{a priori} energy estimates and regularity of the solutions. We restrict our attention to an isotropic material such that the elastic coefficients are given by the Lam\'e parameters, and the Biot coefficient and thermal stress are given by scalar quantities. Some algebraic constraints on these coefficients must be imposed in order to obtain our results. Although the literature on the analysis of poroelastic models is quite extensive, there is not much literature on the analysis of thermo-poroelastic models. In~\cite{van2017thermoporoelasticity} a corresponding energy functional for the thermo-poroelastic model was derived. This functional was then shown to be monotonically decreasing in time for a small enough characteristic temperature difference.

We undertake our analysis with a future mixed finite-element implementation in mind, and therefore double the number of variables from three to six, and investigate the existence and uniqueness of a weak solution corresponding to this fully coupled six-field model. The primary variables in this model are; fluid pressure, temperature, elastic displacement, Darcy flux, heat flux, and total stress. This makes the problem suitable for combinations of well-known stable finite-elements, such as Raviart-Thomas(-N\'ed\'elec)~\cite{nedelec1986new, raviart1977mixed} and Arnold-Winther~\cite{arnold2007mixed, arnold2002mixed}. From an implementation point of view there are several advantages of a mixed formulation over the canonical three-field formulation; the discretization respects mass and energy conservation, produces continuous normal fluxes regardless of mesh quality, and in general a mixed formulation is advantageous for domain decomposition techniques. We restrict our attention to two spatial dimensions, as this will be the most relevant case for the subsequent work, although the results we present can be extended to higher dimensions in a straightforward manner. In particular, the definition of the isotropic compliance tensor must reflect the choice of spatial dimension. 

The main difficulty we face in the following analysis is the nonlinear coupling between the equations, i.e. the nonlinear convective transport term in the energy balance equation, which takes the form $\nabla T \cdot \bw $, where $\bw$ is the Darcy flux, and $T$ is the temperature distribution. The first part of the paper is concerned with analyzing a linearized version of the model, where we write the convective transport term as $\etab \cdot \bw$, for some given $\etab \in L^\infty$. The analysis of the linearized model retains all the coupling terms of the original problem. Once we have obtained the existence and uniqueness of a weak solution to this problem, we introduce an iterative algorithm where we approximate the convective transport term as $\nabla T^{m-1} \cdot \bw^{m}$, where $m \geq 1$ is the iteration index. Due to the results we obtained for the linearized problem, and by a natural assumption that the temperature gradient admits $L^\infty$-regularity in space, we construct a well defined sequence of iterates as $m \rightarrow \infty$. This we show to converge in adequate norms to the solution of the original nonlinear problem, thus establishing the existence and uniqueness of its weak solution. The convergence proof relies on the Banach Fixed Point Theorem, which we use to obtain local solutions in time. Here, the time interval is supposed to be small to ensure a contraction of the successive difference functions of the iterates. Then, using piecewise continuation in time, we extend these local solutions to global solutions for any finite final time. The idea is that such an iterative scheme can also be applied numerically to a discretized formulation, and in this sense our analysis sets the stage for subsequent numerical experiments. We mention also some of the literature on iterative schemes in poroelasticity;  in~\cite{bause2017space, both2017robust, lee2016robust, mikelic2013convergence} there can be found several iterative procedures for solving Biot's equations, and in~\cite{list2016study, pop2004mixed, radu2015robust} iterative methods for solving Richards' equation were analyzed.

\textbf{We summarize the main contribution of the article as follows}: under a natural hypothesis on the regularity of the convective term, we give a proof of existence and uniqueness of a weak solution to the fully coupled six-field thermo-poroelastic problem within the quasi-static framework. 

The article is organized as follows: Section \ref{sec2} recalls the physical model and the assumptions on the data, introduces the relevant functions spaces and some preliminary results and introduces the mixed weak formulations. In section \ref{sec3}, we define a linear version of the original mixed variational problem, and proceed to analyze this in the following way; we construct approximate solutions using a Galerkin method, the existence of which is established by the theory of DAEs (Differential Algebraic Equations). Suitable \textit{a priori} estimates are then derived which enables us to pass to the limit, thanks to the weak compactness of the spaces. Section \ref{sec4}, is then devoted to analyze a linearization solution procedure for the original nonlinear 
problem and to establish the convergence of the algorithm in suitable norms. In Appendix \ref{appendix} we propose an alternative to the hypothesis on the temperature gradient, i.e. we show how the required regularity can be obtained by sufficient regularity of the data. For easy reference of the notation used in this article we provide some tables in Appendix \ref{appB}.
\section{Presentation of the problem}\label{sec2}
Let $\Omega \subset \real^d$, for $d \in \{2,3\}$, be an open and bounded domain, where we denote the boundary by $\Gamma := \partial \Omega$, which is assumed to be Lipschitz continuous. Let a time interval $J = (0,T_f)$ be given with  $T_f > 0$ and define  $Q := \Omega \times (0,T_f]$ to be the space-time domain.  
The thermo-poroelastic model problem we consider,  as it is exposed in~\cite{brun2018thporo},  is as  follows: given a heat source $h$, a body force $\bff$, and a mass source $g$, find $(T, \bu, p)$ such that
\bse\label{thporo}
	\begin{alignat}{3}
	\partial_t(a_0T - b_0p + \beta \nabla \cdot \bu) - \nabla T \cdot (\bK \nabla p)
		- \nabla \cdot(\bth \nabla T) &=h,\quad&&\textnormal{in } Q,\\
		-(\lambda + \mu)\nabla(\nabla \cdot \bu) - \mu \nabla^2 \bu + \alpha \nabla p + \beta \nabla T &= \bff,\quad&&\textnormal{in }Q,\\
		\partial_t(c_0p - b_0 T + \alpha \nabla \cdot \bu) - \nabla \cdot(\bK \nabla p) &= g,\quad&&\textnormal{in } Q,
	\end{alignat}
where $a_0$ is the effective thermal capacity, $b_0$ is the thermal dilation coefficient, $\beta$ is the thermal stress coefficient, $\bK = (K_{ij})_{i,j=1}^d$ is the permeability divided by fluid viscosity, $\bth = (\Theta_{ij})_{i,j=1}^d$ is the effective thermal conductivity, $\mu$ and $\lambda$ are the Lam\'e parameters, $\alpha$ is the Biot-Willis constant and $c_0$ is the specific storage coefficient. The primary variables are the temperature distribution $T$, displacement $\bu$ and fluid pressure $p$. To close the system, we prescribe homogeneous Dirichlet conditions on the boundary, i.e, 
\begin{equation}\label{BCs}
T = 0, \quad \bu = 0, \quad \textnormal{and } \quad p = 0, \quad \textnormal{on } \Gamma \times J,
\end{equation}
and we assume  the following initial conditions 
\begin{equation}\label{ICs}
		T(\cdot,0) = T_0, \quad \bu(\cdot,0) = \bu_0, \quad \textnormal{and } \quad p(\cdot,0) = p_0, \quad \textnormal{in } \Omega \times \{0\},
\end{equation}
for some known functions $T_0$, $\bu_0$ and $p_0$. 
\ese
In practice, we may use nonhomogeneous Dirichlet and
Neumann boundary conditions for which the analysis remains valid.  Note also 
that if $\beta = b_0 = 0$, the above system decouples from the energy equation, and 
the well-known quasi-static Biot equations are recovered (see e.g.~\cite{ahmed:hal-01687026} where both the two- and four-field formulations are presented). 
\subsection{Preliminaries}\label{prelim}
First, we define the spaces that will be used throughout this article, we refer to e.g.~\cite{evans1998partial, yosida1995functional} for more details. For $1\leq p < \infty$ let $L^p(\Omega) = \{ u: \Omega \rightarrow \real : \int_\Omega |u|^p \dx < \infty\}$, with the associated norm $\norm{\cdot}_p$. In particular, $L^2(\Omega)$ is the Hilbert space of square integrable functions defined on $\Omega$, endowed with the inner product $(\cdot, \cdot)$, and the norm $\norm{\cdot} := \norm{\cdot}_2$. For $p=\infty$, $L^\infty(\Omega)$ is the space of uniformly bounded measurable functions defined on $\Omega$ , i.e. $L^\infty(\Omega) = \{ u:\Omega \rightarrow \real : \ess \sup_{x\in \Omega} |u| \leq \infty \}$, endowed with the norm $\norm{u}_{\infty} = \inf \{ C: |u| \leq C \textnormal{ a.e. on } \Omega\}$. We denote by $W^{k,p}(\Omega)$ the Sobolev space of functions in $L^p(\Omega)$, admitting weak derivatives up to order $k$ in the same space. In particular, we denote by $H^1(\Omega) := W^{1,2}(\Omega) = \{ u \in L^2(\Omega) : \nabla u \in (L^2(\Omega))^d\}$, and designate by $H^1_0(\Omega)$ its zero-trace subspace.  Let $H(\divr,\Omega) = \{ \bv \in (L^2(\Omega))^d : \nabla \cdot \bv \in L^2(\Omega) \}$ be the space of vector valued functions, where each component belongs to $L^2(\Omega)$, along with the weak divergence. We endow this space with the norm $\norm{\bv}^2_{H(\divr; \Omega)} := \norm{\bv}^2 + \norm{\nabla \cdot \bv}^2$. Let $H_s(\divr,\Omega) = \{ \btau \in (L^2(\Omega))^{d \times d} : \nabla \cdot \btau \in (L^2(\Omega))^d, \btau_{ij} = \btau_{ji} \textnormal{ for } 1 \leq i,j \leq d\}$ be the space of symmetric tensor valued functions defined on $\Omega$, where each component belongs to $L^2(\Omega)$, and admitting a weak divergence in $(L^2(\Omega))^d$. We denote by $C^1(\Omega)$ the space of continuous functions defined on $\Omega$, admitting continuous partial derivatives. Finally, let $X$ be a Banach space and let $L^p(J;X)$ be the Bochner space of functions in $L^p$ defined on $J$ with values in $X$. Let $\norm{\cdot}_X$ be a norm on $X$, then for $u \in L^p(J;X)$, $p < \infty$, we have $\norm{u}^p_{L^p(J;X)} := \int_0^{T_f} \norm{u(t)}^p_X\dt$. In particular, we will make use of the spaces $H^1(J;L^2(\Omega)) = \{ u(t) : \Omega \rightarrow \real : \int_0^{T_f} (\norm{u(t)}^2 + \norm{\partial_t u(t)}^2) dt < \infty \}$ and $L^\infty(J;L^2(\Omega)) = \{ u(t) : \Omega \rightarrow \real: \ess \sup_{ t \in J} \norm{u(t)} < \infty \}$. Note that if $\bu(t) \in (L^2(\Omega))^d$ is square integrable in time, we shall still write $\bu \in L^2(J;L^2(\Omega))$, but this should not cause any confusion as we will always utilize bold fonts for vector (or tensor) valued functions. \\

The following classical results will be used in this paper.
\newtheorem{ES}[theorem]{Eberlein-\v Smulian~\cite{yosida1995functional}}
\begin{ES}\label{ES}
Let $X$ be a reflexive Banach space, and $\{x_n\}_{n\geq 1}$ a bounded sequence in X. Then there exists a subsequence $\{x_{n_k} \}_{k\geq 1} \subset \{x_n\}_{n\geq1}$ and $x \in X$ such that $x_{n_k} \rightharpoonup x$ in X, as $k \rightarrow \infty$.
\end{ES}

\newtheorem{CM}[theorem]{Banach fixed point~\cite{cheney2013analysis}}
\begin{CM}\label{CM}
Let $X$ be a Banach space and $U \subseteq X$ a closed subset. If $T: U \rightarrow U$ is a contraction map, i.e. there exist $0 < C < 1$ such that $\norm{T(x) - T(y)} \leq C\norm{x-y}$, $\forall x,y \in U$, then $T$ admits a unique fixed point $x^* \in U$. Moreover, the sequence $\{ x_k \}_{k\geq0}$, where $x_0 \in U$ is arbitrary and $x_{k+1} := T(x_k)$, converges to $x^*$.
\end{CM}


\newtheorem{hdiv}[theorem]{Thomas' Lemma~\cite{thomas1977analyse}}
\begin{hdiv}\label{thomas}
	If $u \in L^2(\Omega)$, then there exists $\bu \in H(\divr,\Omega)$, such that a.e. $\nabla \cdot \bu = u$, and $\norm{\bu} \leq C\norm{u}$, for some constant $C > 0$ depending only on the domain and the spatial dimension.
\end{hdiv}

Additionally we shall frequently apply the following classical inequalities. 

\newtheorem{CS}[theorem]{Cauchy - Schwarz' inequality (C-S)}
\begin{CS}\label{CS}
Given functions $u,v \in L^2(\Omega)$, there holds $(u,v) \leq \norm{u} \norm{v}$.
\end{CS}

\newtheorem{young}[theorem]{Young's inequality}
\begin{young}\label{young}
For any $a,b \in \real$, there holds $|ab| \leq \dfrac{\epsilon}{2} a^2 + \dfrac{1}{2\epsilon} b^2$, for any $\epsilon > 0$.
\end{young}

\newtheorem{gr}[theorem]{Gr\"onwall's Lemma}
\begin{gr}\label{gr}
For any continuous function $u$, and integrable and non-decreasing $A$, defined on an interval $I = [a,b]$, such that there holds 
\begin{equation*}
	u(t) \leq A(t) + B \int_a^tu(\tau)\dtau, \quad \forall t \in I,
\end{equation*}
for any constant $B$, then
\begin{equation*}
	u(t) \leq A(t)e^{B(t-a)}, \quad \forall t \in I.
\end{equation*}
\end{gr}
\subsection{Assumptions on the data}\label{assumptions}
Before transcribing the mixed variational formulation of the problem~\eqref{thporo}, we make precise the assumptions on
the data (further generalizations are possible, bringing more technicalities):
\begin{assum}[Data]\label{assumpdata}
\begin{enumerate}[{A}.1]
\item The source terms are such that $\ g, h \in L^2(J;L^2(\Omega)),\textnormal{ and }\bff \in H^1(J;L^2(\Omega)) $.
\item The initial conditions are such that  $p_0, T_0 \in H^1_0(\Omega), \textnormal{ and } \bu_0 \in (L^2(\Omega))^d$.
\item The permeability and heat conductivity tensors are such that $\bK, \bth  \in (L^\infty(\Omega))^{d\times d}$. Furthermore, we assume there exists $k_M, k_m > 0$ such that for a.e. $x \in \Omega$ there holds
\begin{equation*} 
k_m |\zeta|^2 \leq \zeta^T \bK^{-1}(x) \zeta \textnormal{ and } |\bK^{-1}(x)\zeta| \leq k_M |\zeta|, \ \forall \zeta \in \real^d \setminus \{0\},
\end{equation*} 
and there exists $\theta_M, \theta_m > 0$ such that for a.e. $x \in \Omega$ there holds 
\begin{equation*}
\theta_m |\zeta|^2 \leq \zeta^T \bth^{-1}(x) \zeta \textnormal{ and } |\bth^{-1}(x)\zeta| \leq \theta_M |\zeta|, \ \forall \zeta \in \real^d \setminus \{0\}.
\end{equation*}
\item The constants $c_0, b_0, a_0, \alpha, \beta, \mu$, and $\lambda$, are strictly positive.
\end{enumerate}
\end{assum}

\subsection{Mixed variational formulation}\label{var}
We now give the mixed variational formulation of the problem~\eqref{thporo}, for which we need to introduce  the total stress tensor;
$\bsig(\bu, p, T) := 2 \mu \bep(\bu) + \lambda \nabla \cdot \bu \bid - \alpha p \bid - \beta T \bid$, where $\bid$   is the identity tensor
and  $\bep(\bu)$ is  the linearized strain tensor 
given by $\bep(\bu):=(\nabla \bu+\nabla^{\textnormal{T}}\bu)/2$, the Darcy flux  $\bw := -\bK \nabla p$, 
and the heat flux  $\br := -\bth \nabla T$.  For simplicity, we now restrict our attention to the case $d=2$, in which case the fourth order compliance tensor, $\Acal$, is given by
\begin{equation}\label{compliance}
	\Acal \btau := \frac{1}{2\mu} \left( \tau - \frac{\lambda}{2(\mu+\lambda)} \tr(\btau) \bid \right), 
	\quad \btau \in \real^{d\times d},
\end{equation}
as seen in~\cite{yi2014convergence} (see also~\cite{lee2016robust} for the general formula). Note that $\Acal$ is bounded and symmetric positive definite uniformly with respect to $x \in \Omega$, and defines an $L^2$-equivalent norm, i.e.
\begin{equation}\label{complnorm}
	\frac{1}{2(\mu + \lambda)} \norm{\btau}^2 \leq \norm{\btau}_{\Acal}^2 \leq \frac{1}{2\mu} \norm{\btau}^2,\qquad\forall \btau\in \left(L^2(\Omega)\right)^{d\times d},
\end{equation}
where $\norm{\btau}^2_{\Acal} = \int_\Omega \Acal \btau : \btau \dx$. Applying $\Acal$ to the total stress tensor, it is inferred that
\begin{equation}
	\Acal \bsig = \bep(\bu) - \frac{1}{2(\mu+\lambda)} (\alpha p + \beta T)\bid,
\end{equation}
and by taking the trace on both sides, we get the following relationship
\begin{equation}\label{trsig}
\nabla \cdot \bu = \frac{1}{2(\mu + \lambda)} \tr(\bsig) + \frac{1}{\mu + \lambda}(\alpha p+\beta T).
\end{equation}
We also introduce the following notation
\begin{equation}
c_r := \frac{\alpha^2}{\mu + \lambda}, \qquad b_r := b_0 - \frac{\alpha \beta}{\mu + \lambda}, \qquad a_r := \frac{\beta^2}{\mu + \lambda}.
\end{equation}
The  above definitions yields an equivalent mixed form to~\eqref{thporo}:
\bse\label{eqns}
 	\begin{alignat}{2}
 		\partial_t ( a_0 T - b_0 p + \beta \nabla \cdot \bu) + \nabla T \cdot \bw + \nabla \cdot \br &= h,\quad
 		&&\textnormal{in } Q, \\
 		\bth^{-1} \br + \nabla T &= 0,\quad 
 		&&\textnormal{in } Q,\\
 		\partial_t (c_0 p - b_0 T + \alpha \nabla \cdot \bu) + \nabla \cdot \bw &= g,\quad
 		&&\textnormal{in } Q, \\
 		\bK^{-1} \bw + \nabla p &= 0,\quad
 		&&\textnormal{in } Q,\\
 		\Acal \bsig - \bep(\bu) + \frac{c_r}{2\alpha}\bid p + \frac{a_r}{2\beta}\bid T&= 0,\quad
 		&&\textnormal{in } Q,  \\
 		-\nabla \cdot \bsig &= \bff,\quad &&\textnormal{in } Q. 		
 	\end{alignat}
\ese
We now set $$\Tcal := L^2(\Omega),\ \ \Rcal := H(\divr,\Omega),\ \ \Pcal := L^2(\Omega),\ \ \Wcal := H(\divr,\Omega), \ \ \Scal := H_s(\divr,\Omega),\ \ \Ucal := (L^2(\Omega))^d.$$
The following mixed variational formulation of the problem \eqref{thporo} can  be obtained by multiplying by adequate 
test functions and then integrating by parts: find $(T(t), \br(t), p(t), \bw(t), \bsig(t), \bu(t)) \in \Tcal \times \Rcal \times \Pcal \times \Wcal \times \Scal \times \Ucal$, such that a.e. for $t \in J$ there holds
\bse\label{nonlinvar}
	\begin{alignat}{2}
		(a_0 + a_r)(\partial_t T, S) - b_r (\partial_t p, S) + \frac{a_r}{2\beta}(\partial_t \bsig, S\bid) 
		- (\bth^{-1} \br \cdot \bw, S) + (\nabla \cdot \br,S) &= (h,S), \quad 
		&&\forall S \in \Tcal,\label{nonlinenergy}\\
		(\bth^{-1} \br, \by) - (T, \nabla \cdot \by) &= 0, \quad
		&&\forall \by \in \Rcal, \\
		(c_0 + c_r)(\partial_t p, q) -b_r(\partial_t T, q) + \frac{c_r}{2\alpha} (\partial_t \bsig, q\bid) 
		+ (\nabla \cdot \bw,q) &= (g,q), \quad
		&&\forall q \in \Pcal, \\
		(\bK^{-1} \bw, \bz) - (p,\nabla \cdot \bz) &= 0, \quad 
		&&\forall \bz \in \Wcal, \\
		(\Acal \bsig, \btau) + (\bu, \nabla \cdot \btau) + \frac{c_r}{2\alpha}(\bid p, \btau) 
		+ \frac{a_r}{2\beta}(\bid T,\btau) &= 0, \quad
		&&\forall \btau \in \Scal,\\
		-(\nabla \cdot \bsig, \bv) &= (\bff,\bv), \quad
		&&\forall \bv \in \Ucal, 
	\end{alignat}
and such that the initial conditions \eqref{ICs} holds true in the weak sense, i.e.
\begin{equation}\label{weakICs}
(p(0),q) = (p_0,q) \quad \forall q \in \Pcal, \quad ( \bu(0),\bv) = ( \bu_{0},\bv) \quad \forall \bv \in \Ucal, \quad \textnormal{ and } \quad
(T(0),S) = (T_0,S) \quad \forall S \in \Tcal.
\end{equation}
\ese
\begin{remark}
Note that a different variational formulation of the problem \eqref{eqns} is possible, using a weakly symmetric space for the stress tensor. This formulation will then involve a new variable acting as a Lagrange multiplier which is enforcing the symmetry of the stress (see e.g.~\cite{arnold2007mixed, baerland2017weakly, lee2016robust}). For simplicity of presentation we shall keep the formulation \eqref{nonlinvar} throughout. The analysis presented next can nevertheless also be extended to the previously mentioned formulation using the same techniques, as done in~\cite{ahmed:hal-01687026} for the four-field Biot equations.
\end{remark}

\begin{remark}
The nonlinear coupling in the above problem makes the analysis difficult. The next section is therefore devoted to analyzing a linearized problem, the results from which will be helpful when analyzing the full nonlinear problem in the last section. We mention also that other nonlinearities can be added, e.g. nonlinear compressibility or nonlinear Lam\'e parameters. 
\end{remark}
\section{Analysis of the linear problem}\label{sec3}
In this section we introduce a linear version of the problem \eqref{nonlinvar}. Precisely, we replace the convective 
transport term $-(\bth^{-1} \br \cdot \bw, S)$ in the energy balance equation \eqref{nonlinenergy}, 
by $-(\etab \cdot \bw,S)$, for some given $\etab \in L^\infty(\Omega)$. 
We denote by $\gamma := \norm{\etab}_{\infty}$.  We  
introduce the  resulting linear problem which reads: find $(T(t), \br(t), p(t), \bw(t), \bsig(t), \bu(t)) \in \Tcal \times \Rcal \times \Pcal \times \Wcal \times \Scal \times \Ucal$, such that for a.e. $t \in J$ there holds
\begin{subequations}\label{linearized}
	\begin{alignat}{2}
		(a_0 + a_r)(\partial_t T, S) - b_r (\partial_t p, S) + \frac{a_r}{2\beta}(\partial_t \bsig, S\bid) 
		- (\etab \cdot \bw, S) + (\nabla \cdot \br,S) &= (h,S), \quad
		&&\forall S \in \Tcal,\\
		(\bth^{-1} \br, \by) - (T, \nabla \cdot \by) &= 0, \quad 
		&&\forall \by \in \Rcal,\\
		(c_0 + c_r)(\partial_t p, q) -b_r(\partial_t T, q) + \frac{c_r}{2\alpha} (\partial_t \bsig, q\bid) 
		+ (\nabla \cdot \bw,q) &= (g,q), \quad
		&&\forall q \in \Pcal, \\
		(\bK^{-1} \bw, \bz) - (p,\nabla \cdot \bz) &= 0, \quad 
		&&\forall \bz \in \Wcal, \\
		(\Acal \bsig, \btau) + (\bu, \nabla \cdot \btau) + \frac{c_r}{2\alpha}(\bid p, \btau) 
		+ \frac{a_r}{2\beta}(\bid T,\btau) &= 0, \quad
		&&\forall \btau \in \Scal,\\
		-(\nabla \cdot \bsig, \bv) &= (\bff,\bv), \quad
		&&\forall \bv \in \Ucal, 
	\end{alignat}
\end{subequations}
and such that initial conditions \eqref{weakICs} holds true. The remaining part of this section is devoted to proving the well-posedness of this system. 
In what follows, we assume the following hypothesis on the effective thermal capacity $a_0$,  
the thermal dilation coefficient $b_0$, the specific storage coefficient $c_0$ and the Lam\'e parameters $\mu, \lambda$;  
\begin{equation} \label{constraintcoeff}
b_0 - \dfrac{\alpha \beta}{\mu + \lambda} > 0,\qquad \qquad c_0 - \dfrac{c_r}{2} - b_0 - \dfrac{1}{6(\mu + \lambda)} > 0, \qquad
a_0 - \dfrac{a_r}{2} - b_0 - \dfrac{1}{6(\mu + \lambda)} > 0.
\end{equation}

These constraints are typically needed in order to ensure a gradient flow structure. Similar constraints were used to analyze the Biot equations in mixed form in~\cite{ahmed:hal-01687026}. We also refer the reader to~\cite{lee2017parameter} for a more detailed discussion about the scaling of Biot's (isothermal) equations. However, compared to these works, our constraints involve also the thermal coefficients. We omit any further discussion on the justification for these constraints, other than they are necessary to prove the results we present. 
The well-posedness of problem~\eqref{linearized} is then given in the following result.
\begin{theorem}[Well-posedness of the linear problem]\label{linWP}
Under Assumption~\ref{assumpdata}, the problem \eqref{linearized}, \eqref{weakICs} has a unique solution
\begin{subequations}
	\begin{align}
		&(T,\br) \in  H^1(J;L^2(\Omega))\times \left( L^2(J;H(\divr;\Omega)) \cap L^\infty(J;L^2(\Omega)) \right),\\
		&( p,\bw) \in  H^1(J;L^2(\Omega))\times \left( L^2(J;H(\divr;\Omega)) \cap L^\infty(J;L^2(\Omega)) \right), \\
		&(\bu,\bsig) \in   H^1(J;L^2(\Omega))\times \left(L^2(J;H_s(\divr;\Omega)) \cap H^1(J;L^2(\Omega))\right).
	\end{align}
\end{subequations}
Moreover, if $g,h \in H^1(J;L^2(\Omega))$, $\bff \in H^2(J;L^2(\Omega))$ then
\begin{subequations}
	\begin{align}
		&(T,\br) \in W^{1, \infty}(J;L^2(\Omega))\times \left( L^\infty(J;H(\divr;\Omega)) \cap H^1(J;L^2(\Omega)) \right),\\
		&(p,\bw) \in W^{1, \infty}(J;L^2(\Omega))\times \left( L^\infty(J;H(\divr;\Omega)) \cap H^1(J;L^2(\Omega)) \right) , \\
		&(\bu,\bsig) \in W^{1, \infty}(J;L^2(\Omega))\times \left(L^{\infty}(J;H_s(\divr;\Omega)) \cap W^{1, \infty}(J;L^2(\Omega))\right).
	\end{align}
\end{subequations}
\end{theorem}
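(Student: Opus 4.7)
The plan is a Galerkin method in line with the sketch given in the introduction. I would choose ascending, dense families of finite-dimensional subspaces $\Tcal_n\subset\Tcal$, $\Rcal_n\subset\Rcal$, $\Pcal_n\subset\Pcal$, $\Wcal_n\subset\Wcal$, $\Scal_n\subset\Scal$, $\Ucal_n\subset\Ucal$, and seek six sequences $(T_n,\br_n,p_n,\bw_n,\bsig_n,\bu_n)$ lying in them. The projected version of \eqref{linearized} is a linear differential--algebraic system: only $T_n$, $p_n$, $\bsig_n$ carry time derivatives, while $\br_n$, $\bw_n$, $\bu_n$ are determined algebraically through the uniformly positive definite operators $\bth^{-1}$, $\bK^{-1}$, $\Acal$ supplied by Assumption~\ref{assumpdata}. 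Eliminating the algebraic unknowns gives a linear ODE with bounded coefficients and $L^2(J)$ forcing on the coefficients of $T_n,p_n,\bsig_n$, and classical DAE/ODE theory yields a unique global Galerkin solution on $J$.

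The heart of the argument is an $n$-uniform a priori bound. I would test the energy equation with $S=T_n$, the mass equation with $q=p_n$, the time-differentiated constitutive equation with $\btau=\bsig_n$, the heat-flux and Darcy equations with $\by=\br_n$ and $\bz=\bw_n$, and the momentum equation with $\bv=\partial_t\bu_n$. The cross terms $-b_r((\partial_t p_n,T_n)+(\partial_t T_n,p_n))$ and the $\tfrac{a_r}{2\beta}$/$\tfrac{c_r}{2\alpha}$ couplings of $T_n$, $p_n$ with $\bsig_n$ then assemble into the exact time derivative of a quadratic form $\mathcal{Q}(T_n,p_n,\bsig_n)$, while $(\partial_t\bu_n,\nabla\cdot\bsig_n)$ is eliminated by the momentum equation, producing a driving contribution $-\tfrac{d}{dt}(\bff,\bu_n)+(\partial_t\bff,\bu_n)$. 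Using the heat-flux and Darcy identities $(T_n,\nabla\cdot\br_n)=(\bth^{-1}\br_n,\br_n)$, $(p_n,\nabla\cdot\bw_n)=(\bK^{-1}\bw_n,\bw_n)$ to fold the divergence terms into dissipation, the resulting identity reads
\begin{equation*}
\tfrac{d}{dt}\bigl[\mathcal{Q}(T_n,p_n,\bsig_n)-(\bff,\bu_n)\bigr]+(\bth^{-1}\br_n,\br_n)+(\bK^{-1}\bw_n,\bw_n)=(h,T_n)+(g,p_n)+(\partial_t\bff,\bu_n)+(\etab\cdot\bw_n,T_n).
\end{equation*}
The algebraic constraints \eqref{constraintcoeff}, together with the $\Acal$-equivalence \eqref{complnorm} and $|\tr\bsig|\leq\sqrt{d}\,|\bsig|$, are precisely what is needed to prove that $\mathcal{Q}$ is coercive on $\Tcal\times\Pcal\times\Scal$. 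The convective term is split by Young's inequality as $\tfrac{\varepsilon}{2}\|\bw_n\|^2+\tfrac{\gamma^2}{2\varepsilon}\|T_n\|^2$, with the first piece absorbed into $(\bK^{-1}\bw_n,\bw_n)$ via Assumption~\ref{assumpdata}.3; Thomas' Lemma combined with the momentum equation controls $\|\bu_n\|$. Integration in time and Gr\"onwall then yield uniform bounds on $T_n,p_n,\bsig_n$ in $L^\infty(J;L^2(\Omega))$ and on $\br_n,\bw_n,\bsig_n$ in $L^2(J;H(\divr;\Omega))$, and testing the three evolution equations with appropriate elements of $\Tcal_n$, $\Pcal_n$, $\Ucal_n$ delivers $\partial_t T_n,\partial_t p_n,\partial_t\bsig_n\in L^2(J;L^2(\Omega))$.

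Passage to the limit is then routine. Eberlein--\v{S}mulian extracts weakly (and weak-$\ast$) convergent subsequences in the relevant Bochner spaces, and the linearity of \eqref{linearized} allows us to identify the limit equation by equation; the weak initial conditions \eqref{weakICs} are preserved by the $H^1(J;L^2(\Omega))$-continuity in time. Uniqueness follows by applying the same energy identity to the difference of two solutions with vanishing data and initial values. The improved regularity when $g,h\in H^1(J;L^2(\Omega))$ and $\bff\in H^2(J;L^2(\Omega))$ is obtained by formally differentiating \eqref{linearized} in time, using the equations evaluated at $t=0$ to define $\partial_t T_n(0),\partial_t p_n(0),\partial_t\bsig_n(0)$, and rerunning the a priori estimate for the time-differentiated unknowns; the resulting $L^\infty(J;L^2(\Omega))$ bounds give the $W^{1,\infty}$ statements, and the constitutive/flux equations upgrade the spatial regularity of $\br,\bw,\bsig$ to $L^\infty(J;H(\divr;\Omega))$.

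The main obstacle I foresee is the careful bookkeeping of the coupled cross terms in the summed identity and the verification that \eqref{constraintcoeff} is exactly what renders $\mathcal{Q}$ coercive; the linearized convective perturbation $(\etab\cdot\bw,T)$ introduces no real additional difficulty, since $\bw$ is already controlled by the Darcy dissipation term.
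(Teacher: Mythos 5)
Your proposal is correct and takes essentially the same route as the paper: Galerkin approximation, existence of discrete solutions through DAE theory, a priori energy estimates whose coercivity is secured by \eqref{constraintcoeff}, passage to the limit via Eberlein--\v{S}mulian, uniqueness by the same identity applied to the difference of two solutions, and time-differentiated estimates for the improved regularity. The main variation is in forming the energy identity. You time-differentiate the constitutive equation, test it with $\bsig_n$, and test the momentum equation with $\partial_t\bu_n$; this packages everything into a quadratic form $\mathcal{Q}(T_n,p_n,\bsig_n)$ whose coercivity must then be verified. The paper instead tests the undifferentiated constitutive equation with $\partial_t\bsig_m$ and time-differentiates the momentum equation, which leaves $\tfrac{1}{2}\tfrac{d}{dt}\|\bsig_m\|^2_{\Acal}$ on the right-hand side; $\|\bsig_m\|^2_{\Acal}$ is then controlled by a separate, purely algebraic bound in terms of $\|p_k\|^2$, $\|T_i\|^2$, $\|\bff\|^2$ derived from the constitutive and momentum equations together with Thomas' Lemma. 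Both combinations are equivalent up to integration by parts in time; the paper's version trades the explicit coercivity check on $\mathcal{Q}$ (which does hold under \eqref{constraintcoeff} after a careful Young-parameter balance, as you anticipate) for the cleaner decoupled bound on $\|\bsig_m\|_{\Acal}$.

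Two points you should not leave as ``routine''. First, the energy identity alone does not produce the $L^2(J;H(\divr;\Omega))$ bounds on $\br_n,\bw_n,\bsig_n$: the paper controls $\nabla\cdot\br_j$ by expanding it in the temperature basis, testing the energy equation against each basis function, multiplying by the expansion coefficients, summing, and only then passing $i\to\infty$; analogous separate steps are needed for $\nabla\cdot\bw_l$ and $\nabla\cdot\bsig_m$. Second, ``eliminating the algebraic unknowns'' to reduce the Galerkin system to an ODE for $(T_n,p_n,\bsig_n)$ presupposes that the algebraic subsystem is invertible on its own; for the elasticity block $(\bsig_n,\bu_n)$ this would require a discrete inf-sup condition between $\Scal_m$ and $\Ucal_n$ that has not been assumed. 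The paper avoids this by establishing nonsingularity of the matrix pencil $s\Phi+\Psi$ for the fully coupled discrete system directly, which is the statement one actually needs.
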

The proof will follow from a series of partial results to be done in the sequel. The analysis uses a Galerkin's method together with the theory of differential algebraic equations (DAEs), as well as 
weak compactness arguments~(cf. \cite{ahmed:hal-01687026, yi2014convergence, phillips2008coupling, evans1998partial}).
\subsection{Construction of approximate solutions}\label{existence}
First, we need to introduce the following finite dimensional subspaces. Let $(i,j,k,l,m,n) \in \nat^6$ be fixed and strictly positive, and let $\Tcal_i := \spann \{ S_{\ell} \in \Tcal: \ell = 1, \cdots, i \}$, $\Rcal_j := \spann \{ \by_\ell \in \Rcal: \ell = 1, \cdots, j \}$, $\Pcal_k := \spann \{ q_\ell \in \Pcal: \ell = 1, \cdots, k \}$, $\Wcal_l := \spann \{ \bz_\ell \in \Wcal: \ell = 1, \cdots, l \}$, $\Scal_m := \spann \{ \btau_\ell \in \Scal: \ell = 1, \cdots, m \}$ and $\Ucal_n := \spann \{ \bv_\ell \in \Ucal : \ell = 1, \cdots, n \}$, where the functions $S_\ell, \by_\ell, q_\ell, \bz_\ell, \btau_\ell$ and $\bv_\ell$, for $\ell \in \nat$, constitute Hilbert bases for the spaces $\Tcal, \Rcal, \Pcal, \Wcal, \Scal$ and $\Ucal$, respectively. Let now $(T_i, \br_j, p_k, \bw_l, \bsig_m, \bu_n) : [0,T_f]^6 \rightarrow \Tcal_i \times \Rcal_j \times \Pcal_k \times \Wcal_l \times \Scal_m \times \Ucal_n$ be the solution to the following problem:
\bse\label{fdvar}
	\begin{alignat}{2}
		\nonumber(a_0 + a_r)(\partial_t T_i, S_\ell) - b_r(\partial_t p_k, S_\ell) + \frac{a_r}{2\beta}(\partial_t \bsig_m, S_\ell \bid) \qquad \qquad \qquad& \\
		- (\etab \cdot \bw_l, S_\ell) + (\nabla \cdot \br_j, S_\ell) &= (h,S_\ell),\quad
		&&\ell = 1,\cdots,i,\label{fd5}\\[2ex]
		(\bth^{-1} \br_j, \by_\ell) - (T_i, \nabla \cdot \by_\ell) &= 0, \quad
		&&\ell = 1,\cdots,j \label{fd6},\\
		(c_0 + c_r)(\partial_t p_k, q_\ell) - b_r(\partial_t T_i, q_\ell) 
		+ \frac{c_r}{2\alpha} (\partial_t \bsig_m, q_\ell \bid) + (\nabla \cdot \bw_l, q_\ell) 
		&= (g,q_\ell),\quad
		&&\ell = 1,\cdots,k, \label{fd3}\\[2ex]
		(\bK^{-1} \bw_l, \bz_\ell) - (p_k,\nabla \cdot \bz_\ell) &= 0, \quad
		&&\ell = 1,\cdots,l, \label{fd4}\\[2ex]
		(\Acal \bsig_m, \btau_\ell) + (\bu_n, \nabla \cdot \btau_\ell) 
		+ \frac{c_r}{2\alpha}(\bid p_k, \btau_\ell) + \frac{a_r}{2\beta}(\bid T_i,\btau_\ell) &= 0, \quad
		&&\ell = 1,\cdots,m,\label{fd1}\\[2ex]
		-(\nabla \cdot \bsig_m, \bv_\ell) &= (\bff,\bv_\ell), \quad
		&&\ell = 1,\cdots,n. \label{fd2}
	\end{alignat}

We introduce the coefficient vectors of the solutions: let $\bT_i(t) := [T_1(t),\cdots,T_i(t)]^T$ where $T_i(x,t) = \sum_{\ell=1}^i T_\ell(t) S_\ell$, $\bR_j(t) := [r_1(t),\cdots,r_j(t)]^T$ where $\br_j(x,t) = \sum_{\ell=1}^j r_\ell(t) \by_\ell$, $\bP_k(t) := [p_1(t), \cdots, p_k(t)]^T$ where $p_k(x,t) = \sum_{\ell=1}^k p_\ell(t) q_\ell$, $\bW_l(t) := [w_1(t),\cdots,w_l(t)]^T$ where $\bw_l(x,t) = \sum_{\ell=1}^l w_\ell(t) \bz_\ell$, $\bSig_m(t) := [\sigma_1(t), \cdots, \sigma_m(t)]^T$ where $\bsig_m(x,t) = \sum_{\ell=1}^m \sigma_\ell(t) \btau_\ell$ and $\bU_n(t) := [u_1(t), \cdots u_n(t)]^T$ where $\bu_n(x,t) = \sum_{\ell=1}^n u_\ell(t) \bv_\ell$. \\

Thus, we impose the initial conditions by
\begin{equation}\label{fdICs}
	T_\ell(0) = (T_0, S_\ell), \ 1\leq \ell \leq i, \quad u_\ell(0) = (\bu_0, \bv_\ell), \ 1\leq \ell \leq n, 
	\quad p_\ell(0) = (p_0, q_\ell), \ 1\leq \ell \leq k.
\end{equation}
\ese
We also define the following linear operators: $(\bA_{\bsig \bsig})_{\im \jm} := (\Acal \btau_\im,\btau_\jm)$, for $1 \leq \im, \jm \leq m$, $(\bA_{pp})_{\im \jm} := (c_0 + c_r)(q_\im,q_\jm)$, for $1 \leq \im,\jm \leq k$, $(\bA_{TT})_{\im \jm} := (a_0 + a_r)(S_\im,S_\jm)$, for $1\leq \im,\jm \leq i$, $(\bA_{\bw \bw})_{\im \jm} := (\bK^{-1} \bz_\im,\bz_\jm)$, for $1\leq \im,\jm \leq l$, $(\bA_{\br \br})_{\im \jm} := (\bth^{-1} \by_\im, \by_\jm)$, for $1 \leq \im,\jm \leq j$, $(\bA_{\bu \bsig})_{\im \jm} := (\bv_\im, \nabla \cdot \btau_\jm)$, for $1\leq \im \leq n, 1\leq \jm \leq m$, $(\bA_{p \bsig})_{\im \jm} := \dfrac{c_r}{2\alpha}(\bid q_\im, \btau_\jm)$, for $1\leq \im \leq k, 1\leq \jm \leq m$, $(\bA_{T\bsig})_{\im \jm} := \dfrac{a_r}{2\beta}(\bid S_\im, \btau_\jm)$, for $1\leq \im \leq i, 1\leq \jm \leq m$, $(\bA_{Tp})_{\im \jm} := -b_r(S_\im,q_\jm)$, for $1\leq \im \leq i, 1\leq \jm \leq k$, $(\bA_{\bw p})_{\im \jm} := (\nabla \cdot \bz_\im, q_\jm)$, $1\leq \im \leq l, 1\leq \jm \leq k$, $(\bA_{\br T})_{\im \jm} := (\nabla \cdot \by_\im, S_\jm)$, for $1\leq \im \leq l, 1\leq \jm \leq i$, and $(\bA_{\bw T})_{\im \jm} := (\etab \cdot \bz_\im, S_\jm)$, for $1\leq \im \leq l, 1\leq \jm \leq i$. \\

Finally, we define the vectors: $(\bL_1)_\ell := (\bff, \bv_\ell)$, for $1\leq \ell \leq n$, $(\bL_2)_\ell := (g,q_\ell)$, for  $1\leq \ell \leq k$ and $(\bL_3)_\ell := (h,S_\ell)$, for $1\leq \ell \leq i$. 
We rewrite using the above notation the problem \eqref{fdvar} as a system of ODEs
\begin{subequations}
	\begin{alignat}{2}
		&\bA_{TT} \ddt \bT_i + \bA_{Tp} \ddt \bP_k
		+ \bA_{T\bsig} \ddt \bSig_m - \bA_{\bw T} \bW_l + \bA_{\br T}^T \bR_j = \bL_3,\label{hja5}\\
		&\bA_{\br} \bR_j - \bA_{\br T} \bT_i = 0, \label{hja6}\\
		&\bA_{pp} \ddt \bP_k + \bA_{Tp}^T \ddt \bT_i
		+ \bA_{p \bsig} \ddt \bSig_m + \bA_{\bw p}^T \bW_l = \bL_2,\label{hja3}\\
		&\bA_{\bw} \bW_l - \bA_{\bw p} \bP_k = 0,\label{hja4}\\
		&\bA_{\bsig \bsig} \bSig_m + \bA_{\bu \bsig}^T \bU_n + \bA_{p \bsig}^T \bP_k
		+ \bA_{T\bsig}^T \bT_i = 0, \label{hja1}\\
		&-\bA_{\bu \bsig} \bSig_m = \bL_1.\label{hja2}
	\end{alignat}
\end{subequations}
After rearranging, these ODE equations can be written in the form of a DAE system
\begin{equation}\label{DAE}
	\Phi \ddt X(t) + \Psi X(t) = L(t),
\end{equation}
where $X(t) := (\bP_k(t), \bSig_m(t), \bT_i(t), \bW_l(t), \bU_n(t), \bR_j(t))^T$, $L(t) := (\bL_2(t), 0, \bL_3(t), 0, \bL_1(t), 0)^T$ and
\begin{equation}
\Phi := 
\begin{pmatrix}
	\bA_{pp} & \bA_{p \bsig} & \bA_{Tp}^T & 0 & 0 & 0 \\
	0 & 0 & 0 & 0 & 0 & 0 \\
	\bA_{Tp} & \bA_{T\bsig} & \bA_{TT} & 0 & 0 & 0 \\
	0 & 0 & 0 & 0 & 0 & 0 \\
	0 & 0 & 0 & 0 & 0 & 0 \\
	0 & 0 & 0 & 0 & 0 & 0 
\end{pmatrix},
\end{equation}
and
\begin{equation}
\Psi := 
\begin{pmatrix}
	0 & 0 & 0 & \bA_{\bw p}^T & 0 & 0 \\
	\bA_{p \bsig}^T & \bA_{\bsig \bsig} & \bA_{T\bsig}^T & 0 & \bA_{\bu \bsig}^T & 0 \\
	0 & 0 & 0 & -\bA_{\bw T} & 0 & \bA_{\br T}^T \\
	-\bA_{\bw p} & 0 & 0 & \bA_{\bw \bw} & 0 & 0 \\
	0 & -\bA_{\bu \bsig} & 0 & 0 & 0 & 0 \\
	0 & 0 & -\bA_{\br T} & 0 & 0 & \bA_{\br \br}
\end{pmatrix}.
\end{equation}

From the theory of DAEs, equation \eqref{DAE} together with initial conditions \eqref{fdICs} has a solution if the matrix pencil, $s\Phi + \Psi$, is nonsingular for some $s \neq 0 $ (see \cite{brenan1995numerical}). Note that we can write $s\Phi + \Psi$ as a block $2\times 2$ matrix as follows
\begin{equation*}
	s\Phi + \Psi = 
	\begin{pmatrix}
		A & B \\
		-C & D
	\end{pmatrix},
\end{equation*}

where
\begin{equation*}
A = 
\begin{pmatrix}
	s\bA_{pp} & s\bA_{p \bsig} & s\bA_{Tp}^T \\
	\bA_{p \bsig}^T & \bA_{\bsig \bsig} & \bA_{T\bsig}^T \\
	s\bA_{Tp} & s\bA_{T\bsig} & s\bA_{TT}
\end{pmatrix},
B = 
\begin{pmatrix}
	\bA_{\bw p}^T & 0 & 0 \\
	0 & \bA_{\bu \bsig}^T & 0 \\
	-\bA_{\bw T} & 0 & \bA_{\br T}^T
\end{pmatrix},
C = 
\begin{pmatrix}
	\bA_{\bw p} & 0 & 0 \\
	0 & \bA_{\bu \bsig} & 0 \\
	& 0 & \bA_{\br T}
\end{pmatrix},
D = 
\begin{pmatrix}
	\bA_{\bw \bw} & 0 & 0 \\
	0 & 0 & 0 \\
	0 & 0 & \bA{\br \br}
\end{pmatrix}.
\end{equation*}\\

Let $\Bcal = \Scal_m \times \Pcal_k \times \Tcal_i$ and $\Ccal = \Ucal_n \times \Wcal_l \times \Rcal_j$, such that the bilinear form associated with $s\Phi + \Psi$ can be decomposed into the bilinear forms associated with each block, i.e. $\phi_A : \Bcal \times \Bcal \rightarrow \real$, $\phi_B : \Ccal \times \Bcal \rightarrow \real$, $\phi_C : \Bcal \times \Ccal \rightarrow \real$, and $\phi_D : \Ccal \times \Ccal \rightarrow \real$, where
\bse
\begin{alignat}{3}
\nonumber&\phi_A((\bsig_m, p_k, T_i),(\btau, q, S)) &&:= s(c_0 + c_r) (p_k, q) 
	+ \frac{c_r}{2\alpha} (\bid p_k, \btau) + s\frac{c_r}{2\alpha}(\bsig_m, q \bid) - sb_r(p_k,S) \\
	\nonumber&&&\qquad-sb_r(T_i,q) + (\Acal \bsig_m, \btau) + s\frac{a_r}{2\beta} (\bsig_m, S \bid) \\
	&&&\qquad\qquad+ \frac{a_r}{2\beta}(\bid T_i, \btau) + s(a_0 + a_r) (T_i, S), \\
	&\phi_B((\btau, q, S),(\bu_n, \bw_l, \br_j)) &&:= (\nabla \cdot \bw_l,q) + (\bu_n, \nabla \cdot \btau) 
	- (\etab \cdot \bw_l, S) + (\nabla \cdot \br_j,S),\\
	&\phi_{C}((\bsig_m, p_k, T_i),(\bv, \bz, \by)) &&:= (p_k,\nabla \cdot \bz) + (\nabla \cdot \bsig_m, \bv) 
	+ (T_i, \nabla \cdot \by),\\
	&\phi_D((\bu_n,\bw_l,\br_j),(\bv,\bz,\by)) &&:= (\bK^{-1} \bw_l,\bz) + (\bth^{-1} \br_j,\by).
\end{alignat}
\ese

The following Lemma will imply the invertibility of $s\Phi + \Psi$ for some $s \neq 0$.
\begin{lemma}
	For any tuples $(i,j,k,l,m,n) \geq 1$, there exists an $s \neq 0$ such that the bilinear form associated with $s\Phi + \Psi$ is strictly positive i.e.
	\begin{equation}
		\phi_A + \phi_B - \phi_C + \phi_D > 0,
	\end{equation}
	for all nonzero $(\btau, q, S) \in \Bcal$, and $(\bv, \bz, \by) \in \Ccal$.
\end{lemma}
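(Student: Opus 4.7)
The plan is to show strict positivity of $\phi_A + \phi_B - \phi_C + \phi_D$ evaluated on the diagonal --- that is, with $(\btau, q, S) = (\bsig_m, p_k, T_i)$ and $(\bv, \bz, \by) = (\bu_n, \bw_l, \br_j)$ --- by choosing a suitable $s>0$. The first step is to observe that the divergence-type couplings in $\phi_B$ and $\phi_C$ cancel in pairs on the diagonal: the pairings $(\nabla \cdot \bw_l, p_k)$ against $(p_k, \nabla \cdot \bw_l)$, $(\bu_n, \nabla \cdot \bsig_m)$ against $(\nabla \cdot \bsig_m, \bu_n)$, and $(\nabla \cdot \br_j, T_i)$ against $(T_i, \nabla \cdot \br_j)$ cancel identically, leaving only the convective remainder $-(\etab \cdot \bw_l, T_i)$. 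After rearranging, the expression becomes
\begin{equation*}
\Xi = s\bigl[(c_0+c_r)\norm{p_k}^2 - 2b_r(p_k,T_i) + (a_0+a_r)\norm{T_i}^2\bigr] + (s+1)\Bigl[\tfrac{c_r}{2\alpha}(p_k,\tr\bsig_m) + \tfrac{a_r}{2\beta}(T_i,\tr\bsig_m)\Bigr] + \norm{\bsig_m}^2_\Acal + (\bK^{-1}\bw_l,\bw_l) + (\bth^{-1}\br_j,\br_j) - (\etab\cdot\bw_l,T_i).
\end{equation*}

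Next, I would bound each indefinite cross term via Cauchy--Schwarz and Young's inequality. The trace estimate $\norm{\tr\bsig_m}^2 \leq 2\norm{\bsig_m}^2$ (valid for $d=2$), combined with the lower bound $\norm{\bsig_m}^2_\Acal \geq \tfrac{1}{2(\mu+\lambda)}\norm{\bsig_m}^2$ from \eqref{complnorm}, allows the $\norm{\bsig_m}^2$ contributions generated by the two stress cross terms to be absorbed into reserved fractions of the compliance norm: concretely, I would split $\norm{\bsig_m}^2_\Acal$ into three equal pieces of $\tfrac{1}{6(\mu+\lambda)}\norm{\bsig_m}^2$, which matches exactly the constant appearing in the hypothesis \eqref{constraintcoeff}. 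The convective remainder $-(\etab\cdot\bw_l,T_i)$ is bounded by $\gamma\norm{\bw_l}\norm{T_i}$ with $\gamma = \norm{\etab}_{\infty}$, and absorbed into half of the $\bK^{-1}$-coercivity, using $(\bK^{-1}\bw_l,\bw_l) \geq k_m\norm{\bw_l}^2$ from Assumption~\ref{assumpdata}. The mass cross term $-2sb_r(p_k,T_i)$ is handled elementarily by $sb_r(\norm{p_k}^2 + \norm{T_i}^2)$.

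After these absorptions, the coefficients of $\norm{p_k}^2$ and $\norm{T_i}^2$ reduce, modulo lower-order corrections, to expressions whose positivity translates directly into the strict inequalities $c_0 - \tfrac{c_r}{2} - b_0 - \tfrac{1}{6(\mu+\lambda)} > 0$ and the analogous condition on $a_0$, both of which are guaranteed by \eqref{constraintcoeff}, together with the bound $b_r \leq b_0$ that follows from Assumption~\ref{assumpdata} and the first inequality in \eqref{constraintcoeff}. The residual coefficients of $\norm{\bsig_m}^2$, $\norm{\bw_l}^2$, and $\norm{\br_j}^2$ are strictly positive by construction. Choosing $s$ within the admissible window thus obtained yields $\Xi > 0$, which implies the nonsingularity of $s\Phi + \Psi$ and hence the regularity of the pencil needed for the DAE solvability criterion of Brenan et al.

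The main obstacle will be balancing the $s$-dependencies: the $(s+1)$ factor on the stress cross terms produces an $(s+1)^2$-growth of the effective coefficients of $\norm{p_k}^2$ and $\norm{T_i}^2$ after Young's absorption into a fixed fraction of $\norm{\bsig_m}^2_\Acal$, while the coercivity of the mass terms is only linear in $s$. Consequently $s$ cannot be taken arbitrarily large, and a careful selection within a bounded window is required. The quantitative calibration of the hypothesis \eqref{constraintcoeff} --- in particular the specific constant $\tfrac{1}{6(\mu+\lambda)}$ --- is precisely what guarantees that this window is nonempty for any admissible choice of physical coefficients.
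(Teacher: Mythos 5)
Your proposal follows essentially the same route as the paper: pass to the diagonal, observe that the divergence pairings in $\phi_B$ and $\phi_C$ cancel identically (leaving only the convective remainder $-(\etab\cdot\bz,S)$), then bound the remaining mixed terms via Cauchy--Schwarz and Young and tune $s$ and the Young parameters so that the residual coefficients are nonnegative. You also correctly flag the genuine tension that the paper navigates: the Young absorption of the $(1+s)$-weighted trace cross terms into a fixed fraction of $\norm{\btau}^2_{\Acal}$ produces an $(1+s)^2/s$ growth, so $s$ must be chosen in a bounded window; the paper writes $s=-2$, but inspection of the signs in its Young bound shows this must be a typo for a positive value (with $s<-1$ the displayed lower bound $-(1+s)\tfrac{c_r}{2\alpha}\tfrac{\epsilon_1}{2}\norm{q}^2-\dots$ has the wrong sign), so your insistence on $s>0$ is the correct reading.

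Two calibration points in your write-up are off the mark, though. First, you attribute the constant $\tfrac{1}{6(\mu+\lambda)}$ in \eqref{constraintcoeff} to an equal three-way split of $\norm{\btau}^2_{\Acal}$ carried out \emph{in this lemma}; in fact that constant only enters later, in the a priori estimates (e.g.\ \eqref{bb1}), via the choice $\epsilon_3=\tfrac{1}{6C(\mu+\lambda)}$. The present lemma uses only the weaker consequences $c_0+c_r-b_r>0$ and $a_0+a_r-b_r>0$ of \eqref{constraintcoeff}: the paper picks $\epsilon_1,\epsilon_2$ so that the coefficients of $\norm{q}^2$ and $\norm{S}^2$ become exactly zero, and positivity is then carried entirely by the $\btau$, $\bz$, $\by$ terms. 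Second, and related, your claim that "the coefficients of $\norm{p_k}^2$ and $\norm{T_i}^2$ reduce to expressions whose positivity translates directly into $c_0-\tfrac{c_r}{2}-b_0-\tfrac{1}{6(\mu+\lambda)}>0$" is therefore not what actually happens. Finally, note (as a shared feature with the paper rather than a flaw specific to your write-up) that $\bv$ drops out entirely of the diagonal quadratic form, so the bound obtained is really only nonnegativity, with equality allowed when only $q$, $S$, or $\bv$ is nonzero; passing from this to invertibility of the pencil, which is what the DAE theory requires, needs an extra word that neither you nor the paper supplies explicitly.
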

\begin{proof}
	Denoting by $\btau = \begin{pmatrix} \tau_{11}&  \tau_{12} \\ \tau_{21} & \tau_{22} \end{pmatrix}$, 
	and using the definition of the compliance tensor \eqref{compliance}, together with the C-S, 
	Young, and triangle inequalities yields
\begin{align}\label{posdef}
	\nonumber \phi_A((\btau,q,S),&(\btau,q,S)) + \phi_B((\bv, \bz, \by),(\tau, q, S)) 
	- \phi_C((\tau, q, S),(\bv, \bz, \by)) + \phi_D((\bv, \bz, \by),(\bv, \bz, \by)) \\
	\nonumber&= s(c_0 + c_r) \norm{q}^2 + s(a_0 + a_r) \norm{S}^2 
	+ (1+s)\frac{c_r}{2\alpha} (\bid q,\btau) - 2sb_r(q,S) + (1+s)\frac{a_r}{2\beta}(\btau,S\bid)  \\
	\nonumber&\quad + (\Acal \btau,\btau)- (\etab \cdot \bz, S) + (\bK^{-1} \bz, \bz) + (\bth^{-1} \by, \by) \\
	\nonumber&\qquad\geq \left( s(c_0 + c_r - b_r) - (1+s)\frac{c_r}{2\alpha} \frac{\epsilon_1}{2} \right)\norm{q}^2 
	+ \left(s(a_0 + a_r - b_r) - (1+s)\frac{a_r}{2\beta} \frac{\epsilon_2}{2} - \frac{\gamma}{2k_m} \right)\norm{S}^2 \\
	\nonumber&\qquad\quad + \left(\frac{1}{2(\mu+\lambda)} - (1+s)\frac{c_r}{2\alpha} \frac{1}{2\epsilon_1}
	- (1+s)\frac{a_r}{2\beta} \frac{1}{2\epsilon_2} \right) \left( \norm{\btau_{11}}^2 + \norm{\btau_{22}}^2 \right) \\
	&\qquad \qquad+ \theta_m \norm{\by}^2 + \frac{k_m}{2}\norm{\bz}^2 + \frac{1}{\mu} \norm{\btau_{12}}^2.
\end{align}
	What remains is to show if there exist  parameters $\epsilon_1, \epsilon_2$, and $s$ such that the 
	following six constraints are satisfied
\begin{align}
	0 &\leq s(c_0 + c_r - b_r) - (1+s)\frac{c_r}{2\alpha} \frac{\epsilon_1}{2} \\
	0 &\leq s(a_0 + a_r - b_r) - (1+s)\frac{a_r}{2\beta} \frac{\epsilon_2}{2} - \frac{\gamma}{2k_m},\\
	0 &\leq \frac{1}{2(\mu+\lambda)} - (1+s)\frac{c_r}{2\alpha} \frac{1}{2\epsilon_1} 
	- (1+s)\frac{a_r}{2\beta} \frac{1}{2\epsilon_2},\\
	0 &< \epsilon_1, \epsilon_2, \ \textnormal{ and } \ s \neq 0.
\end{align}
	It is easily verified that the following choices are satisfactory: 
	$s = -2, \epsilon_1 = \dfrac{4\alpha}{c_r(1+s)} s(c_0 + c_r - b_r)$, 
	and $\epsilon_2 = \dfrac{4\beta}{a_r(1+s)} \left(s(a_0 + a_r - b_r) - \dfrac{\gamma}{2k_m}\right)$. 
	We use these choices in~\eqref{posdef}, and letting $\tilde{\gamma} = \dfrac{\gamma}{2k_m}$, it is inferred that
\begin{alignat}{2}
	\nonumber&\phi_A((\btau,q,S),(\btau,q,S)) + \phi_B((\bv, \bz, \by),(\tau, q, S)) 
	- \phi_C((\tau, q, S),(\bv, \bz, \by)) + \phi_D((\bv, \bz, \by),(\bv, \bz, \by)) \\
	\nonumber&\qquad \geq \frac{1}{2(\mu + \lambda)}\left(1 + \frac{1}{16(\mu + \lambda)(c_0 + c_r - b_r)}
	+ \frac{1}{16(\mu + \lambda)(a_0 + a_r - b_r + \tilde{\gamma})} \right) 
	\left( \norm{\btau_{11}}^2 + \norm{\btau_{22}}^2\right) \\
	&\qquad\qquad  +\frac{k_m}{2} \norm{\bz}^2 + \theta_m \norm{\by}^2 + \frac{1}{\mu}\norm{\tau_{12}}^2  > 0, \qquad \textnormal{for all nonzero } (\btau, q, S) \in \Bcal,\ (\bv, \bz, \by) \in \Ccal.
\end{alignat}
Thus, there exists an $s \neq 0$ such that $s\Phi + \Psi$ is nonsingular, and the equation \eqref{DAE} has a solution.
\end{proof}
\subsection{\textit{A priori} estimates}\label{subsec:apriori}

In this section, we derive \textit{a priori} estimates for the unknowns which will allow us to pass to the limit 
in problem \eqref{fdvar} by weak compactness arguments. We summarize these estimates in the following theorem.
\begin{theorem}[\textit{A priori} estimates]\label{apriori}
Under the Assumption~\ref{assumpdata}, there exists a constant $C > 0$, independent of $(i,j,k,l,m,n)\geq1$, such that
\begin{itemize}
\item[(i)]
	$\quad \norm{p_k}_{L^\infty(J;L^2(\Omega))}^2 + \norm{T_i}_{L^\infty(J;L^2(\Omega))}^2
	+ \norm{\bw_l}^2_{L^2(J;L^2(\Omega))} + \norm{\br_j}^2_{L^2(J;L^2(\Omega))} + \norm{\bsig(0)}_{\Acal}^2$
	
	$\hspace{4cm} \leq C\left( \norm{\bff}^2_{H^1(J;L^2(\Omega))}
	+ \norm{g}^2_{L^2(J;L^2(\Omega))} + \norm{h}^2_{L^2(J;L^2(\Omega))} 
	+ \norm{p_0}^2_{H^1_0(\Omega)} + \norm{T_0}^2_{H^1_0(\Omega)} \right)$,
\item[(ii)]
	$\quad \norm{\partial_t p_k}^2_{L^2(J;L^2(\Omega))} + \norm{\partial_t T_i}^2_{L^2(J;L^2(\Omega))}
	+ \norm{\bw_l}^2_{L^\infty(J;L^2(\Omega))} + \norm{\br_j}^2_{L^\infty(J;L^2(\Omega))}$
	
	$\hspace{4cm} \leq C \left(\norm{\bff}^2_{H^1(J;L^2(\Omega))} + \norm{g}^2_{L^2(J;L^2(\Omega))}
	+ \norm{h}^2_{L^2(J;L^2(\Omega))} + \norm{p_0}^2_{H^1_0(\Omega)} + \norm{T_0}^2_{H^1_0(\Omega)}\right)$,
\item[(iii)]
	$\quad \norm{\bsig_m}^2_{L^\infty(J;L^2(\Omega))} + \norm{\partial_t \bsig_m}^2_{L^2(J;L^2(\Omega))} 
	+ \norm{\bu_n}^2_{L^\infty(J;L^2(\Omega))} + \norm{\partial_t \bu_n}^2_{L^2(J;L^2(\Omega))} $
	
	$\hspace{4cm} \leq C\left( \norm{\bff}^2_{H^1(J;L^2(\Omega))} + \norm{g}^2_{L^2(J;L^2(\Omega))} 
	+ \norm{h}^2_{L^2(J;L^2(\Omega))} + \norm{p_0}^2_{H_0^1(\Omega)} + \norm{T_0}^2_{H_0^1(\Omega)}\right)$,
\item[(iv)]
	$\quad \norm{\bw_l}_{L^2(J;H(\divr, \Omega))}^2 + \norm{\br_j}_{L^2(J;H(\divr, \Omega))}^2 
	+ \norm{\bsig_m}^2_{L^2(J;H_s(\divr,\Omega))}$
	
	$\hspace{4cm} \leq C \left(\norm{\bff}^2_{H^1(J;L^2(\Omega))} + \norm{g}^2_{L^2(J;L^2(\Omega))}
	+ \norm{h}^2_{L^2(J;L^2(\Omega))} + \norm{p_0}^2_{H^1_0(\Omega)} + \norm{T_0}^2_{H^1_0(\Omega)}\right).$
\end{itemize}
\end{theorem}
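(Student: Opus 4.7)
The plan is to derive all four estimates by a carefully chosen test-function strategy that exposes a gradient-flow structure, followed by Grönwall's Lemma and an inf-sup-type recovery of the displacement. The overall blueprint is similar in spirit to the Biot-equation analysis in~\cite{ahmed:hal-01687026}, but the presence of thermal coupling forces simultaneous handling of three coupled sub-systems.

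To obtain (i), I would test the energy equation \eqref{fd5} with $S=T_i$, the heat flux equation \eqref{fd6} with $\by=\br_j$, the mass equation \eqref{fd3} with $q=p_k$, the Darcy equation \eqref{fd4} with $\bz=\bw_l$, differentiate the elasticity constraint \eqref{fd1} in time and test with $\btau=\bsig_m$, and finally test the momentum equation \eqref{fd2} with $\bv=\partial_t\bu_n$. The duality pairings $(\nabla\cdot\br_j,T_i)$, $(\nabla\cdot\bw_l,p_k)$ and $(\partial_t\bu_n,\nabla\cdot\bsig_m)$ cancel, while the mixed time-derivative couplings $b_r[(\partial_t p_k,T_i)+(\partial_t T_i,p_k)]$ and the analogous $\tr\bsig_m$ pairings combine into exact time derivatives via the Leibniz rule. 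After summation the left-hand side reads
\begin{equation*}
\tfrac{1}{2}\tfrac{d}{dt}\,\mathcal{E}(T_i,p_k,\bsig_m)+(\bth^{-1}\br_j,\br_j)+(\bK^{-1}\bw_l,\bw_l),
\end{equation*}
with
\begin{equation*}
\mathcal{E}:=(a_0+a_r)\norm{T_i}^2+(c_0+c_r)\norm{p_k}^2-2b_r(T_i,p_k)+\norm{\bsig_m}_{\Acal}^2+\tfrac{c_r}{\alpha}(p_k,\tr\bsig_m)+\tfrac{a_r}{\beta}(T_i,\tr\bsig_m),
\end{equation*}
which, thanks to the constraints \eqref{constraintcoeff} together with a Young splitting of the cross terms and the norm equivalence \eqref{complnorm}, is bounded below by a positive combination of $\norm{T_i}^2$, $\norm{p_k}^2$ and $\norm{\bsig_m}_{\Acal}^2$. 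The right-hand side contains $(h,T_i)$, $(g,p_k)$, the convective contribution $(\etab\cdot\bw_l,T_i)$ (absorbed by Young using $\norm{\etab}_\infty\le\gamma$, with $\gamma^2\norm{\bw_l}^2/(2\epsilon)$ absorbed into $k_m\norm{\bw_l}^2$ from the Darcy term), and $-(\bff,\partial_t\bu_n)$, which is handled by integration by parts in time, $\int_0^t(\bff,\partial_t\bu_n)\,d\tau=(\bff,\bu_n)|_0^t-\int_0^t(\partial_t\bff,\bu_n)\,d\tau$. The term $\norm{\bu_n}$ is in turn controlled via Thomas' Lemma applied to \eqref{fd1}: for any $\bv^*\in\Ucal$ one constructs $\btau^*\in\Scal$ with $\nabla\cdot\btau^*=\bv^*$ and $\norm{\btau^*}\le C\norm{\bv^*}$, yielding $\norm{\bu_n}\le C(\norm{\bsig_m}+\norm{p_k}+\norm{T_i})$. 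Integration in time and Grönwall's Lemma close estimate (i).

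Estimate (ii) follows from the same template, but with time-differentiated test functions: the energy and mass equations are tested with $\partial_tT_i$ and $\partial_tp_k$, while the heat flux and Darcy equations are first differentiated in time and then tested with $\br_j$ and $\bw_l$ to produce $\tfrac{1}{2}\tfrac{d}{dt}\norm{\br_j}_{\bth^{-1}}^2$ and $\tfrac{1}{2}\tfrac{d}{dt}\norm{\bw_l}_{\bK^{-1}}^2$; the stress and momentum equations are both differentiated in time and tested with $\partial_t\bsig_m$ and $\partial_t\bu_n$, respectively. The same combinatorial cancellations now yield coercive terms $\norm{\partial_tT_i}^2$, $\norm{\partial_tp_k}^2$ and $\norm{\partial_t\bsig_m}_{\Acal}^2$. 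The convective contribution $(\etab\cdot\bw_l,\partial_tT_i)$ is bounded by $\tfrac{\epsilon}{2}\norm{\partial_tT_i}^2+\tfrac{\gamma^2}{2\epsilon}\norm{\bw_l}^2$, whose second piece is already controlled in $L^2(J;L^2)$ by (i). The worst term $-(\partial_t\bff,\partial_t\bu_n)$ is tamed by applying Thomas' Lemma to the time-differentiated elasticity constraint to get $\norm{\partial_t\bu_n}\le C(\norm{\partial_t\bsig_m}+\norm{\partial_tp_k}+\norm{\partial_tT_i})$, so that Young's inequality with a sufficiently small parameter absorbs it into the left-hand side. Grönwall then delivers (ii).

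Estimates (iii) and (iv) are consequences of (i) and (ii). The $L^\infty(J;L^2)$ bound on $\bsig_m$ and the $L^2(J;L^2)$ bound on $\partial_t\bsig_m$ come directly out of the two energy identities above; the $\bu_n$ and $\partial_t\bu_n$ bounds follow from the Thomas' Lemma arguments already invoked. For (iv), the divergence norms are read off the equations themselves: $\norm{\nabla\cdot\bsig_m}=\norm{\bff}$ from \eqref{fd2}, while $\nabla\cdot\bw_l$ and $\nabla\cdot\br_j$ are isolated from \eqref{fd3} and \eqref{fd5} as bounded combinations of the source data and of quantities already controlled by (i)-(iii). I expect the main obstacle to be verifying that $\mathcal{E}$ is genuinely positive definite, which is precisely the role of the hypotheses \eqref{constraintcoeff}; a secondary difficulty is that the convective coupling $(\etab\cdot\bw_l,\partial_tT_i)$ in the differentiated identity forces us to import the $L^2(J;L^2)$ control of $\bw_l$ from (i) \emph{before} Grönwall can be closed on (ii), making the sequential order (i)$\to$(ii)$\to$(iii)$\to$(iv) essential.
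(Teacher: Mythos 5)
Your strategy for (i) and (ii) is genuinely different from the paper's. The paper never forms a coupled energy functional: it first establishes, purely algebraically by testing \eqref{fd1} with $\bsig_m$ and \eqref{fd2} with $\bu_n$, the bounds $\norm{\bsig_m}^2_{\Acal}\leq(\tfrac{3}{2}c_r+\tfrac{1}{6(\mu+\lambda)})\norm{p_k}^2+(\tfrac{3}{2}a_r+\tfrac{1}{6(\mu+\lambda)})\norm{T_i}^2+C\norm{\bff}^2$ and $\norm{\bu_n}^2\leq C(\norm{p_k}^2+\norm{T_i}^2+\norm{\bff}^2)$ (and the analogues for $\partial_t\bsig_m$, $\partial_t\bu_n$), and only then sums the Galerkin equations with $\partial_t\bsig_m$ chosen as test function in \eqref{fd1} and \eqref{fd2} differentiated in time and tested with $\bu_n$. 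This puts $\tfrac{1}{2}\ddt\norm{\bsig_m}^2_{\Acal}$ on the \emph{right-hand} side, where it is annihilated after time integration by re-substituting the algebraic bound; the particular numerical factors $\tfrac{c_r}{2}$ and $\tfrac{1}{6(\mu+\lambda)}$ in \eqref{constraintcoeff} are dictated by that substitution. Your choice instead --- differentiate \eqref{fd1} in time and test with $\bsig_m$, test \eqref{fd2} with $\partial_t\bu_n$ --- makes every time-derivative pairing an exact Leibniz derivative and collapses the whole argument into a single coercivity claim for $\mathcal{E}$, and the same for the time-differentiated analogue in (ii). That is a legitimate and arguably cleaner route; what it buys is that the stress is controlled in $L^\infty(J;L^2)$ directly from the differential inequality for $\mathcal{E}$, rather than a posteriori through (iii).

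However, there is a real gap: you assert, without verification, that coercivity of $\mathcal{E}$ is ``precisely the role of'' the hypotheses \eqref{constraintcoeff}. It is not obvious, and the naive estimate does not reproduce them. Decompose $\bsig_m$ into its trace part and trace-free part; the trace-free part enters $\mathcal{E}$ only through $\norm{\bsig_m}_{\Acal}^2$ and decouples, leaving a quadratic form in $(p_k,T_i,\tr\bsig_m)$. Eliminating $\tr\bsig_m$ by a Schur complement (using $\tfrac{c_r}{2\alpha}=\tfrac{\alpha}{2(\mu+\lambda)}$, $\tfrac{a_r}{2\beta}=\tfrac{\beta}{2(\mu+\lambda)}$, and $b_r=b_0-\tfrac{\alpha\beta}{\mu+\lambda}$) reduces the matrix exactly to $\left(\begin{smallmatrix}c_0&-b_0\\-b_0&a_0\end{smallmatrix}\right)$, so $\mathcal{E}$ is positive definite if and only if $c_0 a_0>b_0^2$, which is implied by \eqref{constraintcoeff} but strictly weaker. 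Alternatively, a Young splitting of the cross terms against $\norm{\bsig_m}^2_{\Acal}$ produces the condition $(c_0-b_r)(a_0-b_r)>c_r a_r$, which again follows from \eqref{constraintcoeff} only after using $\tfrac{\alpha\beta}{\mu+\lambda}=\sqrt{c_r a_r}$. Either computation must be written out; ``the hypotheses were designed to make this work'' is not a proof, and it is precisely here that your approach diverges from the paper's bookkeeping. Once that is supplied, the remaining pieces --- integration by parts in time for $\int(\bff,\partial_t\bu_n)$, Thomas' Lemma for $\bu_n$ and $\partial_t\bu_n$, the small-$\epsilon$ absorption of $(\etab\cdot\bw_l,\partial_t T_i)$ using the $L^2$-in-time control of $\bw_l$ from (i), and reading off the divergences in (iv) by expanding $\nabla\cdot\br_j$, $\nabla\cdot\bw_l$, $\nabla\cdot\bsig_m$ against the Hilbert bases and passing to the limit --- all match the paper in substance and are correct, as is your observation that the order (i)$\to$(ii)$\to$(iii)$\to$(iv) is forced.
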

\begin{proof}
By Thomas' Lemma~\ref{thomas} there exist $\tilde{\bsig} \in H^1(J;\Scal_m)$ such that $-\nabla \cdot \tilde{\bsig}(\cdot,t) = \bu_n(\cdot, t)$ on $\Omega$ for $t \in J$, and with $\norm{\tilde{\bsig}(t)} \leq C \norm{\bu_n(t)}$. 
Thus, we set $\btau_\ell = \tilde{\bsig}(t)$ in \eqref{fd1} and obtain
\begin{align}
	\nonumber \norm{\bu_n}^2 &= -(\bu_n, \nabla \cdot \tilde{\bsig}) = (\Acal \bsig_m, \tilde{\bsig}) 
	+ \frac{c_r}{2\alpha} (\bid p_k,\tilde{\bsig}) + \frac{a_r}{2\beta}(\bid T_i, \tilde{\bsig}),\\
	&\leq \left(\frac{1}{2\mu}\norm{\bsig_m} + \frac{c_r}{2\alpha}\norm{p_k} 
	+ \frac{a_r}{2\beta}\norm{T_i}\right)\norm{\tilde{\bsig}} 
	\leq \left(\frac{1}{2\mu}\norm{\bsig_m} + \frac{c_r}{2\alpha}\norm{p_k} 
	+ \frac{a_r}{2\beta}\norm{T_i}\right)C\norm{\bu_n}, 
\end{align}
which implies
\begin{equation}\label{b1}
	\norm{\bu_n}^2 \leq C\left(\norm{\bsig_m}^2 + \norm{p_k}^2 + \norm{T_i}^2\right),
\end{equation}
for some constant $C > 0$ depending on the coefficients, domain and spatial dimension. Next, we take $\btau_\ell = \bsig_m$ in \eqref{fd1} and $\bv_\ell = \bu_n$ in \eqref{fd2}, and add the resulting equations together to obtain
\begin{equation}
	\norm{\bsig_m}^2_{\Acal} = -\frac{c_r}{2\alpha}(\bid p_k, \bsig_m) 
	- \frac{a_r}{2\beta} (\bid T_i, \bsig_m) + (\bff, \bu_n).
\end{equation}
Applying the C-S and Young inequalities together with the above estimate \eqref{b1} yields
\begin{align}
	\nonumber \norm{\bsig_m}^2_{\Acal} &\leq \frac{c_r}{2\alpha} \left( \frac{1}{2\epsilon_1} \norm{p_k}^2 
	+ \frac{\epsilon_1}{2} \norm{\bsig_m}^2 \right)
	+ \frac{a_r}{2\beta} \left( \frac{1}{2\epsilon_2} \norm{T_i}^2 + \frac{\epsilon_2}{2} \norm{\bsig_m}^2 \right) 
	+ \frac{1}{2\epsilon_3} \norm{\bff}^2 + \frac{\epsilon_3}{2} \norm{\bu_n}^2\\
	\nonumber&\leq \left( \frac{\alpha}{2} \epsilon_1 + \frac{\beta}{2} \epsilon_2 
	+ C(\mu + \lambda)\epsilon_3 \right) \norm{\bsig_m}^2_{\Acal} 
	+ \left(\frac{c_r}{4\alpha \epsilon_1} + C\frac{\epsilon_3}{2} \right) \norm{p_k}^2\\
	&\hspace{1cm}+ \left(\frac{a_r}{4\beta \epsilon_2} + C\frac{\epsilon_3}{2} \right) \norm{T_i}^2 
	+ \frac{1}{2\epsilon_3} \norm{\bff}^2.
\end{align}
Choosing suitable values for the epsilons, i.e., $\epsilon_1 = \dfrac{1}{3\alpha}$, $\epsilon_2 = \dfrac{1}{3\beta}$, and $\epsilon_3 = \dfrac{1}{6C(\mu + \lambda)}$, we obtain	
\begin{equation}\label{bb1}
	\norm{\bsig_m}^2_{\Acal} \leq \left(\frac{3}{2} c_r + \frac{1}{6(\mu + \lambda)} \right)\norm{p_k}^2
	+ \left(\frac{3}{2} a_r + \frac{1}{6(\mu + \lambda)} \right)\norm{T_i}^2 + C\norm{\bff}^2.
\end{equation}
It then follows immediately that
\begin{equation}\label{bb2}
	\norm{\bu_n}^2 \leq C\left(\norm{p_k}^2 + \norm{T_i}^2 + \norm{\bff}^2\right).
\end{equation}
Take now $\tilde{\bsig} \in L^2(J;\Scal_m)$ such that $-\nabla \cdot \tilde{\bsig}(\cdot,t) = \partial_t\bu_n (\cdot,t)$ on $\Omega$, for $t \in J$, and with $\norm{\tilde{\bsig}(t)} \leq C \norm{\partial_t \bu_n(t)}$. Then, by differentiating equation \eqref{fd1} with respect to time, and setting $\btau_\ell = \tilde{\bsig}$, we get in the same way as before
\begin{equation}\label{b2}
	\norm{\partial_t \bu_n}^2 \leq C\left(\norm{\partial_t \bsig_m}^2 + \norm{\partial_t p_k}^2 + \norm{\partial_t T_i}^2\right).
\end{equation}
We continue by differentiating equations \eqref{fd1} and \eqref{fd2} with respect to time, and take $\partial_t \bsig_m$ and $\partial_t \bu_n$ as test functions, respectively, and get analogously
\begin{equation}\label{bb3}
	\norm{\partial_t \bsig_m}^2_{\Acal} \leq \left(\frac{3}{2} c_r + \frac{1}{6(\mu + \lambda)} \right)\norm{\partial_t p_k}^2
	+ \left(\frac{3}{2} a_r + \frac{1}{6(\mu + \lambda)} \right)\norm{\partial_t T_i}^2 + C\norm{\partial_t \bff}^2,
\end{equation}
and 
\begin{equation}\label{bb4}
	\norm{\partial_t \bsig_m}^2 \leq C\left(\norm{\partial_t p_k}^2 + \norm{\partial_t T_i}^2
	+ \norm{\partial_t \bff}^2\right),
\end{equation}
where the constants $C > 0$ depends on the coefficients, domain, and spatial dimension.
Next, we take $\partial_t \bsig_m$, $p_k$, $\bw_l$, $T_i$ and $\br_j$ as a test functions in \eqref{fd1}, \eqref{fd3}, \eqref{fd4}, \eqref{fd5} and \eqref{fd6}, respectively. We differentiate then~\eqref{fd2} with respect to time, 
and take $\bu_n$ as a test function. Adding together the resulting equations yields
\begin{align}
	\nonumber &(c_0 + c_r)(\partial_t p_k, p_k) + (a_0 + a_r)(\partial_t T_i, T_i) + (\bK^{-1} \bw_l, \bw_l) 
	+ (\bth^{-1} \br_j, \br_j) \\
	&= (\Acal \bsig_m, \partial_t \bsig_m) + b_r(\partial_t T_i, p_k) + b_r(\partial_t p_k, T_j) + (\etab \cdot \bw_l, T_i)
	- (\partial_t \bff, \bu_n) + (g,p_k) + (h,T_i).
\end{align}
Using the properties of $\bK$ and $\bth$, in addition to the C-S and Young inequalities yields
\begin{align}
	\nonumber \left(c_0 + c_r - b_r \right) \frac{1}{2}\ddt \norm{p_k}^2 
	&+ \left(a_0 + a_r - b_r \right) \frac{1}{2} \ddt \norm{T_i}^2 
	+ \left(k_m - \gamma \frac{1}{2\epsilon}\right) \norm{\bw_l}^2 + \theta_m \norm{\br_j}^2 \\
	&\leq \frac{1}{2} \left(\ddt \norm{\bsig_m}^2_{\Acal}
	+ (\epsilon +1) \norm{T_i}^2    
	+ \norm{\bu_n}^2 + \norm{p_k}^2 + \norm{\partial_t \bff}^2 + \norm{g}^2 + \norm{h}^2\right).
\end{align}
Choosing $\epsilon = \dfrac{\gamma}{k_m}$, integrating from $0$ to $t$ and 
substituting the inequalities \eqref{b1} and \eqref{bb1}, we deduce
\begin{align}
	\nonumber &\left(c_0 - \frac{c_r}{2} - b_r - \frac{1}{6(\mu + \lambda)}  \right) \norm{p_k(t)}^2 
	+ \left(a_0 - \frac{a_r}{2} - b_r - \frac{1}{6(\mu + \lambda)} \right) \norm{T_i(t)}^2
	+ \int_0^t \bigg(k_m \norm{\bw_l(\tau)}^2 + \theta_m \norm{\br_j(\tau)}^2 \bigg)\dtau \\
	\nonumber&\hspace{1cm}\leq C\int_0^t \left(\norm{p_k(\tau)}^2 + \norm{T_i(\tau)}^2 \right) \dtau 
	- \norm{\bsig_m(0)}^2_{\Acal}\\
	&\hspace{2cm}+C\left( \norm{\bff}^2_{H^1(J;L^2(\Omega))}
	+ \norm{g}^2_{L^2(J;L^2(\Omega))} + \norm{h}^2_{L^2(J;L^2(\Omega))} 
	+ \norm{p_k(0)}^2 + \norm{T_i(0)}^2 \right).
\end{align}
Since from \eqref{fdICs} we have
\begin{equation}
	\norm{T_i(0)}^2 \leq \norm{T_0}^2 \text{ and } \norm{p_k(0)}^2 \leq \norm{p_0}^2,
\end{equation}
we obtain the first estimate $(i)$ using Gr\"onwall's inequality, i.e.
\begin{align}\label{bb5}
	\nonumber&\norm{p_k}_{L^\infty(J;L^2(\Omega))}^2 + \norm{T_i}_{L^\infty(J;L^2(\Omega))}^2
	+ \norm{\bw_l}^2_{L^2(J;L^2(\Omega)} + \norm{\br_j}^2_{L^2(J;L^2(\Omega)} + \norm{\bsig_m(0)}_{\Acal}^2\\
	&\hspace{3cm} \leq C\left( \norm{\bff}^2_{H^1(J;L^2(\Omega))}
	+ \norm{g}^2_{L^2(J;L^2(\Omega))} + \norm{h}^2_{L^2(J;L^2(\Omega))} 
	+ \norm{p_0}^2 + \norm{T_0}^2 \right),
\end{align}
where the constant $C>0$ depends on $\etab$, the coefficients, domain and spatial dimension.
For the second estimate, we differentiate \eqref{fd1}, \eqref{fd2}, \eqref{fd4} and \eqref{fd6} 
with respect to time and use $\partial_t \bsig_m, \partial_t \bu_n, \bw_l$ and $\br_j$ as test functions, 
respectively. In \eqref{fd3} and \eqref{fd5}, we use $\partial_t p_k$ and $\partial_t T_i$ as test functions, respectively. Summing the resulting equations yields
\begin{align}
	\nonumber &(c_0 + c_r)\norm{\partial_t p_k}^2 + (a_0 + a_r)\norm{\partial_t T_i}^2 
	+ (\bK^{-1} \partial_t \bw_l, \bw_l) + (\bth^{-1} \partial_t \br_j, \br_j) \\
	&\qquad= \norm{\partial_t \bsig_m}^2_{\Acal} + 2b_r(\partial_t T_i, \partial_t p_k) + (\etab \cdot \bw_l, \partial_t T_i) 
	- (\partial_t \bff, \partial_t \bu_n) + (g, \partial_t p_k) + (h, \partial_t T_i).
\end{align}
By applying the C-S and Young inequalities, and substituting the estimates \eqref{b2} and \eqref{bb3}, we deduce
\begin{align}
	\nonumber&\left( c_0 - \frac{c_r}{2} - b_r - \frac{1}{6(\mu + \lambda)} - \frac{\epsilon_2}{2}\right) 
	\norm{\partial_t p_k}^2 \\
	\nonumber&\hspace{1cm} + \left( a_0 - \frac{a_r}{2} - b_r - \frac{1}{6(\mu + \lambda)} - \frac{\epsilon_4}{2} - \frac{\epsilon_3}{2} \right) 
	\norm{\partial_t T_i}^2+ \frac{k_m}{2} \ddt \norm{\bw_l}^2 + \frac{\theta_m}{2} \ddt \norm{\br_j}^2 \\
	&\hspace{2cm} \leq \frac{\epsilon_1}{2} C \left( \norm{\partial_t p_k}^2 + \norm{\partial_t T_i}^2 + \norm{\partial_t \bff}^2\right)
	+ \gamma \frac{1}{2\epsilon_4} \norm{\bw_l}^2 + \frac{1}{2\epsilon_1} \norm{\partial_t \bff}^2 
	+ \frac{1}{2\epsilon_2} \norm{g}^2 
	+ \frac{1}{2\epsilon_3} \norm{h}^2.
\end{align}
Choosing suitable values for the epsilons, i.e. $\epsilon_1 = \dfrac{\alpha \beta}{C(\mu + \lambda)}$, $\epsilon_2 = \dfrac{\alpha \beta}{\mu + \lambda}$, $\epsilon_3 = \dfrac{\alpha \beta}{2(\mu + \lambda)}$, and $\epsilon_4 = \dfrac{\alpha \beta}{2(\mu + \lambda)}$, 
we infer
\begin{align}\label{helpbound}
	\nonumber \left( c_0 - \frac{c_r}{2} - b_0 - \frac{1}{6(\mu + \lambda)} \right) \norm{\partial_t p_k}^2
	&+ \left( a_0 - \frac{a_r}{2} - b_0 - \frac{1}{6(\mu + \lambda)} \right) \norm{\partial_t T_i}^2 
	+ \frac{k_m}{2} \ddt \norm{\bw_l}^2 + \frac{\theta_m}{2} \ddt \norm{\br_j}^2 \\
	&\hspace{1cm} \leq C\left( \norm{\bw_l}^2 + \norm{\partial_t \bff}^2 + \norm{g}^2 + \norm{h}^2 \right).
\end{align}
Simplifying the above expression, integrating over $(0,t)$ and using the initial conditions yields
\begin{align}
	\nonumber&\norm{\bw_l(t)}^2 + \norm{\br_j(t)}^2 + \int_0^t \left(\norm{\partial_t p_k(\tau)}^2 
	+ \norm{\partial_t T_i(\tau)}^2\right) \dtau \\
	&\leq C\int_0^t \norm{\bw_l(\tau)}^2 \dtau 
	+ C\left(\norm{\partial_t \bff}^2_{L^2(J;L^2(\Omega))} + \norm{g}^2_{L^2(J;L^2(\Omega))}
	+ \norm{h}^2_{L^2(J;L^2(\Omega))} + \norm{\bw_l(0)}^2 + \norm{\br_j(0)}^2\right).\label{somebound}
\end{align}
It remains to provide estimates for $\norm{\bw_l(0)}^2$ and $\norm{\br_j(0)}^2$. To this end, take $\bw_l$ as a test function in equation \eqref{fd4}, and set $t = 0$. This gives
\begin{equation}
	(\bK^{-1} \bw_l(0), \bw_l(0)) = (p_k(0), \nabla \cdot \bw_l(0)),
\end{equation}
which holds true for any $k,l \geq 1$. Use now the properties of $\bK$ to bound the left-hand side, tend $k \rightarrow \infty$ and then integrate by parts in the right-hand side to obtain
\begin{equation}
k_m \norm{\bw_l(0)}^2 \leq (p_0, \nabla \cdot \bw_l(0)) = - (\nabla p_0, \bw_l(0)) \leq \norm{\nabla p_0} \norm{\bw_l(0)}.
\end{equation}
Thus, we have
\begin{equation}\label{winitbound}
	\norm{\bw_l(0)}^2 \leq C \norm{p_0}^2_{H^1_0(\Omega)}.
\end{equation}
Similarly, using \eqref{fd6}, we obtain
\begin{equation}\label{rinitbound}
	\norm{\br_j(0)}^2 \leq C\norm{T_0}^2_{H^1_0(\Omega)}.
\end{equation}
Taking now \eqref{winitbound} and \eqref{rinitbound} in \eqref{somebound}, and applying Gr\"onwall's lemma~\ref{gr}, we obtain the 
second  estimate $(ii)$, i.e.
\begin{align}\label{bb6}
	\nonumber \norm{\partial_t p_k}^2_{L^2(J;L^2(\Omega))} &+ \norm{\partial_t T_i}^2_{L^2(J;L^2(\Omega))}
	+ \norm{\bw_l}^2_{L^\infty(J;L^2(\Omega))} + \norm{\br_j}^2_{L^\infty(J;L^2(\Omega))}\\
	&\leq C \left(\norm{\bff}^2_{H^1(J;L^2(\Omega))} + \norm{g}^2_{L^2(J;L^2(\Omega))}
	+ \norm{h}^2_{L^2(J;L^2(\Omega))} + \norm{p_0}^2_{H^1_0(\Omega)} + \norm{T_0}^2_{H^1_0(\Omega)}\right),
\end{align}
where the constant $C>0$ depends on $\etab$, the coefficients, domain and spatial dimension.
Now we sum the estimates \eqref{bb1}, \eqref{bb2}, \eqref{bb3}, and \eqref{bb4}, and substitute the estimates \eqref{bb5} and \eqref{bb6}, to obtain $(iii)$, i.e.
\begin{align}\label{sigubound}
	\nonumber&\norm{\bsig_m}^2_{L^\infty(J;L^2(\Omega))} + \norm{\partial_t \bsig_m}^2_{L^2(J;L^2(\Omega))} 
	+ \norm{\bu_n}^2_{L^\infty(J;L^2(\Omega))} + \norm{\partial_t \bu_n}^2_{L^2(J;L^2(\Omega))} \\
	&\hspace{3cm} \leq C\left( \norm{\bff}^2_{H^1(J;L^2(\Omega))} + \norm{g}^2_{L^2(J;L^2(\Omega))} 
	+ \norm{h}^2_{L^2(J;L^2(\Omega))} + \norm{p_0}^2_{H_0^1(\Omega)} + \norm{T_0}^2_{H_0^1(\Omega)}\right).
\end{align}
It remains to obtain the estimate $(iv)$, for which we need just to bound the divergences. Since $\nabla \cdot \br_j(t) \in L^2(\Omega)$ for $t \in J$, we can write $\nabla \cdot \br_j(t) = \sum_{\ell = 1}^\infty \xi_\ell(t) S_\ell$, for some functions $\xi_\ell(t) \in \real$. Now, we multiply equation \eqref{fd5} with $\xi_\ell$, sum over $\ell = 1,..,i$ and use the C-S and Young inequalities to obtain
\begin{align}\label{predivr}
	\nonumber&(\nabla \cdot \br_j, \sum_{\ell = 1}^{i} \xi_\ell S_\ell) 
	= (h,\sum_{\ell = 1}^{i} \xi_\ell S_\ell)
	- (a_0 + a_r) (\partial_{t} T_i, \sum_{\ell = 1}^{i} \xi_\ell S_\ell)
	- \frac{a_r}{2\beta} (\partial_{t} \bsig_l, \sum_{\ell = 1}^{i} \xi_\ell S_\ell)
	+ b_r(\partial_{t} p_k, \sum_{\ell = 1}^{i} \xi_\ell S_\ell)
	+ (\etab \cdot \bw_l, \sum_{\ell = 1}^{i} \xi_\ell S_\ell) \\
	&\hspace{1cm}\leq \frac{1}{2} \bigg(\norm{\sum_{\ell = 1}^{i} \partial_t \xi_\ell q_\ell}^2 + 5\norm{h}^2 
	+ 5(a_0 + a_r)^2 \norm{\partial_{t} T_i}^2 
	+ \frac{5a_r^2}{4\beta^2} \norm{\partial_{t} \bsig_m}^2 
	+ 5b_r^2 \norm{\partial_{t} p_k}^2 + 5\gamma \norm{\bw_l}^2 \bigg).
\end{align}
Using \eqref{bb3}, integrating in time and using \eqref{bb6} we get
\begin{align}\label{bb7}
	\int_0^{T_f} (\nabla \cdot \br_j, \sum_{\ell = 1}^{i} \xi_{\ell} S_{\ell}) \dt
	\nonumber&\leq \frac{1}{2} \int_0^{T_f} \norm{\sum_{\ell = 1}^{i} \xi_{\ell} S_{\ell}}^2\dt \\
	&+ C\left(\norm{\bff}^2_{H^1(J;L^2(\Omega))} + \norm{g}^2_{L^2(J;L^2(\Omega))} 
	+ \norm{h}^2_{L^2(J;L^2(\Omega))} + \norm{p_0}^2_{H_0^1(\Omega)} + \norm{T_0}^2_{H_0^1(\Omega)} \right).
\end{align}
Finally, tend $i \rightarrow \infty$ and obtain
\begin{equation}\label{divbound1}
	\norm{\nabla \cdot \br_j}^2_{L^2(J;L^2(\Omega))} 
	\leq C\left( \norm{\bff}^2_{H^1(J;L^2(\Omega))}
	+ \norm{g}^2_{L^2(J;L^2(\Omega))} + \norm{h}^2_{L^2(J;L^2(\Omega))} + \norm{p_0}^2_{H_0^1(\Omega)} + \norm{T_0}^2_{H_0^1(\Omega)}\right).
\end{equation}
From equations \eqref{fd3} and \eqref{fd2} we obtain using the same technique
\begin{equation}\label{divbound3}
	\norm{\nabla \cdot \bw_l}^2_{L^2(J;L^2(\Omega))} \leq C\left( \norm{\bff}^2_{H^1(J;L^2(\Omega))}
	+ \norm{g}^2_{L^2(J;L^2(\Omega))} + \norm{h}^2_{L^2(J;L^2(\Omega))} + \norm{p_0}^2_{H_0^1(\Omega)} + \norm{T_0}^2_{H_0^1(\Omega)} \right),
\end{equation}
and 
\begin{equation}\label{divbound2}
	\norm{\nabla \cdot \bsig_m}^2_{L^2(J;L^2(\Omega))} \leq C \norm{\bff}^2_{L^2(J;L^2(\Omega))},
\end{equation}
where the constants $C>0$ depends on $\etab$, the coefficients, domain and spatial dimension. Combining now the estimates  \eqref{divbound1}--\eqref{divbound3} with $(i)$ and $(iii)$, we get the estimate $(iv)$. This ends the proof.
\end{proof}
The following estimates proves that the solution has improved regularity given some additional regularity on the data. We state the result as a lemma:
\begin{lemma}[Estimates for improved regularity]\label{improvedregularity}
	Assume that $\bff \in H^2(J;L^2(\Omega))$ and $g,h \in H^1(J;L^2(\Omega))$. Then there exists a constant $C>0$ independent of $(i,j,k,l,m,n)$ such that
	\begin{itemize}
	\item[(i)]
	$\quad \norm{p_k}^2_{W^{1,\infty}(J;L^2(\Omega))} + \norm{T_i}^2_{W^{1,\infty}(J;L^2(\Omega))}
	+ \norm{\bw_l}^2_{H^1(J;L^2(\Omega))} + \norm{\br_j}^2_{H^1(J;L^2(\Omega))} + \norm{\partial_t \bsig_m(0)}^2$
	
	$\hspace{2cm} \leq C \left( \norm{\bff}^2_{H^2(J;L^2(\Omega))} + \norm{g}^2_{H^1(J;L^2(\Omega))}
	+ \norm{h}^2_{H^1(J;L^2(\Omega))} + \norm{p_0}^2_{H_0^1(\Omega)} + \norm{T_0}^2_{H_0^1(\Omega)}\right)$,

	\item[(ii)]
	$\quad \norm{\bsig_m}^2_{W^{1,\infty}(J;L^2(\Omega))} + \norm{\bu_n}^2_{W^{1,\infty}(J;L^2(\Omega))}$  
	
	$\hspace{2cm} \leq C \left( \norm{\bff}^2_{H^2(J;L^2(\Omega))} + \norm{g}^2_{H^1(J;L^2(\Omega))}
	+ \norm{h}^2_{H^1(J;L^2(\Omega))} + \norm{p_0}^2_{H_0^1(\Omega)} + \norm{T_0}^2_{H_0^1(\Omega)}\right)$,
	\item[(iii)]
	$\quad \norm{\bw_l}^2_{L^\infty(J;H(\divr,\Omega))} + \norm{\br_j}^2_{L^\infty(J;H(\divr,\Omega))}
	+ \norm{\bsig_m}^2_{L^\infty(J;H_s(\divr,\Omega))}$
	
	$\hspace{2cm} \leq C \left( \norm{\bff}^2_{H^2(J;L^2(\Omega))} + \norm{g}^2_{H^1(J;L^2(\Omega))}
	+ \norm{h}^2_{H^1(J;L^2(\Omega))} + \norm{p_0}^2_{H_0^1(\Omega)} + \norm{T_0}^2_{H_0^1(\Omega)}\right)$.
	\end{itemize}
\end{lemma}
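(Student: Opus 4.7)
The plan is to mirror the proof of Theorem~\ref{apriori} on the $t$-differentiated version of the semi-discrete system \eqref{fdvar}. Since the data now satisfy $\bff \in H^2(J;L^2(\Omega))$ and $g,h \in H^1(J;L^2(\Omega))$, we may legitimately differentiate \eqref{fd1}--\eqref{fd6} once in $t$. The differentiated system has exactly the same bilinear structure as \eqref{fdvar}, only with every unknown replaced by its time derivative, parabolic terms carrying $\partial_{tt}$, and forcings $\partial_t\bff, \partial_t g, \partial_t h$ belonging to the same regularity class as in Assumption~\ref{assumpdata}.

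For $(i)$, I would apply the argument leading to estimate $(i)$ of Theorem~\ref{apriori} verbatim to this differentiated system: the cross-couplings cancel pairwise by the same symmetry, the left-hand side remains positive under the coefficient constraints \eqref{constraintcoeff}, and Gr\"onwall's Lemma then yields the $L^\infty(J;L^2(\Omega))$ bounds on $\partial_t p_k, \partial_t T_i$ together with the $L^2(J;L^2(\Omega))$ bounds on $\partial_t \bw_l, \partial_t \br_j$ asserted in $(i)$. The improved bounds in $(ii)$ follow by replaying the $\bsig$- and $\bu$-estimates \eqref{bb1}--\eqref{bb4} pointwise in $t$, substituting the now $L^\infty$-in-time controls of $p_k, T_i, \partial_t p_k, \partial_t T_i$ on the right-hand sides. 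Finally, $(iii)$ is obtained by redoing the divergence manipulations \eqref{predivr}--\eqref{divbound2} in $L^\infty(J;L^2(\Omega))$ rather than $L^2(J;L^2(\Omega))$, which is now legitimate thanks to $(i)$--$(ii)$.

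The principal obstacle is bounding the initial values $\partial_t p_k(0), \partial_t T_i(0), \partial_t \bsig_m(0), \partial_t \bu_n(0)$, which are not prescribed directly but are determined by the equations at $t=0$. The reconstruction proceeds analogously to the derivation of \eqref{bb5}: \eqref{fd4} and \eqref{fd6} at $t=0$ already yield $\bw_l(0), \br_j(0)$ in terms of $\nabla p_0, \nabla T_0$ as in \eqref{winitbound}--\eqref{rinitbound}; \eqref{fd1}--\eqref{fd2} at $t=0$ give $\bsig_m(0)$ and $\bu_n(0)$ via \eqref{bb1}--\eqref{bb2}. Then \eqref{fd3}, \eqref{fd5} evaluated at $t=0$, coupled with the $t$-differentiated \eqref{fd1} at $t=0$ (which expresses $\partial_t\bsig_m(0)$ in terms of $\partial_t p_k(0), \partial_t T_i(0), \partial_t \bu_n(0)$) and with the $t$-differentiated \eqref{fd2} at $t=0$ (tested using a Thomas-type preimage of $\partial_t \bu_n(0)$), form a closed linear system for the unknowns $(\partial_t p_k(0), \partial_t T_i(0), \partial_t \bsig_m(0), \partial_t \bu_n(0))$. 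Its coercivity, guaranteed by the same pairwise-cancellation argument under \eqref{constraintcoeff} as in Theorem~\ref{apriori}, produces the required bound in terms of $\norm{p_0}_{H^1_0(\Omega)}, \norm{T_0}_{H^1_0(\Omega)}, \norm{\bff(0)}, \norm{g(0)}, \norm{h(0)}$; the latter three are in turn dominated by the Bochner norms in the statement via the continuous embedding $H^1(J;L^2(\Omega)) \hookrightarrow C^0(\bar J;L^2(\Omega))$. Once these initial bounds are in place, the three estimates follow as described.
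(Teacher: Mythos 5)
Your proposal is correct and takes essentially the same route as the paper: differentiate the semi-discrete system once in time, replay the energy estimates of Theorem~\ref{apriori} on the differentiated system, and control the initial values $\partial_t p_k(0),\partial_t T_i(0)$ by a coercivity argument. The only cosmetic difference is that the paper extracts the initial bound by setting $t=0$ in the already-derived inequality \eqref{helpbound} (rather than assembling a fresh linear system at $t=0$ as you describe), but the underlying calculation and the use of \eqref{winitbound}, \eqref{bb3}--\eqref{bb4} are the same.
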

\begin{proof}
We begin by differentiating equations \eqref{fd1}, \eqref{fd3}, \eqref{fd4}, \eqref{fd5} and \eqref{fd6} with respect to time, and take $\partial_{tt} \bsig_m$, $\partial_t p_k$, $\partial_t \bw_l$, $\partial_t T_i$ and $\partial_t \br_j$ as a test functions respectively. Then, we differentiate \eqref{fd2} twice with respect to time, and take $\partial_t \bu_n$ as a test function. Adding together the resulting equations yields
\begin{align}
	\nonumber &(c_0 + c_r)\frac{1}{2} \ddt \norm{\partial_t p_k}^2 + (a_0 + a_r)\frac{1}{2} \ddt \norm{\partial_tT_i}^2 
	+ (\bK^{-1} \partial_t\bw_l, \partial_t\bw_l) 
	+ (\bth^{-1} \partial_t\br_j, \partial_t\br_j) \\
	\nonumber&= \frac{1}{2} \ddt \norm{\partial_t\bsig_m}_{\Acal} + b_r\ddt(\partial_t T_i, \partial_tp_k)  
	+ (\etab \cdot \partial_t \bw_l, \partial_tT_i) \\
	&\hspace{6cm}- (\partial_{tt} \bff, \partial_t\bu_n) + (\partial_tg,\partial_tp_k) + (\partial_th,\partial_tT_i).
\end{align}
Using the properties of $\bK$ and $\bth$, in addition to the C-S and Young inequalities, we get
\begin{align}\label{wrinitpre}
	\nonumber &\left(c_0 + c_r - b_r \right)\frac{1}{2}\ddt \norm{\partial_t p_k}^2 
	+ \left(a_0 + a_r - b_r \right) \frac{1}{2} \ddt \norm{\partial_t T_i}^2 
	+ \frac{k_m}{2} \norm{\partial_t \bw_l}^2 + \theta_m \norm{\partial_t \br_j}^2 \\
	&\qquad \leq \frac{1}{2}\bigg(\ddt \norm{\partial_t \bsig_m}^2_{\Acal}
	+ \frac{\gamma}{k_m} \norm{\partial_tT_i}^2 + \norm{\partial_tp_k}^2 + \norm{\partial_t\bu_n}^2
	+ \norm{\partial_{tt} \bff}^2 
	 + \norm{\partial_tg}^2 + \norm{\partial_th}^2\bigg).
\end{align}
By integrating over  $(0,t)$, using the initial conditions
and substituting the inequalities \eqref{bb3} and \eqref{bb4}, it is inferred that
\begin{align}\label{lbound1}
	\nonumber &\left(c_0 - \frac{c_r}{2} - b_r - \frac{1}{6(\mu + \lambda)}  \right) \norm{\partial_tp_k(t)}^2 
	+ \left(a_0 - \frac{a_r}{2} - b_r - \frac{1}{6(\mu + \lambda)} \right) \norm{\partial_tT_i(t)}^2\\
	\nonumber&\hspace{1cm}+ \int_0^t \bigg(k_m \norm{\partial_t\bw_l(\tau)}^2 + \theta_m \norm{\partial_t\br_j(\tau)}^2 \bigg)\dtau 
	+ \norm{\partial_t\bsig_m(0)}_{\Acal}^2\\
	\nonumber&\hspace{2cm} \leq C \int_0^t \left( \norm{\partial_tp_k(\tau)}^2 + \norm{\partial_tT_i(\tau)}^2  \right) \dtau \\
	&\hspace{2.5cm}+C\left(\norm{\bff}^2_{H^2(J;L^2(\Omega))} + \norm{g}^2_{H^1(J;L^2(\Omega))} 
	+ \norm{h}^2_{H^1(J;L^2(\Omega))} + \norm{\partial_t p(0)}^2 + \norm{\partial_tT(0)}^2\right).
\end{align}
We proceed to bound $\norm{\partial_t p_k(0)}$ and $\norm{\partial_t T_i(0)}$. To this end, we discard the terms under the time differential on the left-hand side of \eqref{helpbound} and set $t=0$ to obtain
\begin{align}
	\nonumber &\left( c_0 - \frac{c_r}{2} - b_0 - \frac{1}{6(\mu + \lambda)} \right) \norm{\partial_t p_k(0)}^2
	+ \left( a_0 - \frac{a_r}{2} - b_0 - \frac{1}{6(\mu + \lambda)} \right) \norm{\partial_t T_i(0)}^2 \\
	&\qquad\qquad\qquad\qquad\hspace{2.5cm}\leq C\left(\norm{\bw_l(0)}^2 + \norm{\partial_t \bff(0)}^2 + \norm{g(0)}^2 + \norm{h(0)}^2 \right).
\end{align}
Using \eqref{winitbound} to bound the initial value of the Darcy flux yields
\begin{align}\label{lbound2}
	\norm{\partial_t p_k(0)}^2 + \norm{\partial_t T_i(0)}^2 
	\leq C\left(\norm{p_0}^2_{H^1_0(\Omega)} + \norm{\bff}^2_{H^2(J;L^2(\Omega))} 
	+ \norm{g}^2_{H^1(J;L^2(\Omega))} + \norm{h}^2_{H^1(J;L^2(\Omega))} \right).
\end{align}
Now we substitute this in \eqref{lbound1}, using also $(i)$ from Theorem \ref{apriori} and apply Gr\"onwall's Lemma~\ref{gr} to obtain 
\begin{align}\label{addreg1}
	\nonumber&\norm{\partial_tp_k}^2_{L^\infty(J;L^2(\Omega))} + \norm{\partial_tT_i}^2_{L^\infty(J;L^2(\Omega))} 
	+ \norm{\partial_t\bw_l}^2_{L^2(J;L^2(\Omega))} + \norm{\partial_t\br_j}^2_{L^2(J;L^2(\Omega))}  
	+ \norm{\partial_t\bsig_m(0)}_{\Acal}^2\\
	&\hspace{2cm}\leq C\left( \norm{\bff}^2_{H^2(J;L^2(\Omega))} + \norm{g}^2_{H^1(J;L^2(\Omega))} 
	+ \norm{h}^2_{H^1(J;L^2(\Omega))} + \norm{p_0}^2_{H_0^1(\Omega)} + \norm{T_0}^2_{H_0^1(\Omega)} \right).
\end{align}
Summing with $(i)$ from Theorem \ref{apriori} produces the estimate $(i)$.
We continue by summing \eqref{bb3} and \eqref{bb4}, and combine with \eqref{addreg1} to obtain
\begin{align}\label{siguimproved}
	\nonumber&\norm{\partial_{t} \bsig_m}^2_{L^\infty(J;L^2(\Omega))}
	 + \norm{\partial_{t} \bu_n}^2_{L^\infty(J;L^2(\Omega))} \\
	 &\hspace{2cm}\leq C\left( \norm{\bff}^2_{H^2(J;L^2(\Omega))} + \norm{g}^2_{H^1(J;L^2(\Omega))}  
	+ \norm{h}^2_{H^1(J;L^2(\Omega))} + \norm{p_0}^2_{H_0^1(\Omega)} + \norm{T_0}^2_{H_0^1(\Omega)}\right).
\end{align}
Summing the above with $(iii)$ from Theorem \ref{apriori} produces the estimate $(ii)$.
Going back to the estimate \eqref{predivr}, we now substitute in the right-hand side with \eqref{addreg1} and \eqref{siguimproved}, let $i \rightarrow \infty$ to obtain
\begin{align}\label{addregdivbound1}
	\nonumber &\norm{\nabla \cdot \br_j}^2_{L^\infty(J;L^2(\Omega))} \\
	&\hspace{2cm}\leq C\left( \norm{\bff}^2_{H^2(J;L^2(\Omega))}
	+ \norm{g}^2_{H^1(J;L^2(\Omega))} + \norm{h}^2_{H^1(J;L^2(\Omega))} 
	+ \norm{p_0}^2_{H_0^1(\Omega)} + \norm{T_0}^2_{H_0^1(\Omega)}\right).
\end{align}
From equations \eqref{fd3} and \eqref{fd2} we obtain using the same technique
\begin{align}\label{addregdivbound2}
	\nonumber &\norm{\nabla \cdot \bw_l}^2_{L^\infty(J;L^2(\Omega))} \\
	&\hspace{2cm}\leq C\left( \norm{\bff}^2_{H^2(J;L^2(\Omega))}
	+ \norm{g}^2_{H^1(J;L^2(\Omega))} + \norm{h}^2_{H^1(J;L^2(\Omega))} 
	+ \norm{p_0}^2_{H_0^1(\Omega)} + \norm{T_0}^2_{H_0^1(\Omega)} \right),
\end{align}
and 
\begin{equation}\label{addregdivbound3}
	\norm{\nabla \cdot \bsig_m}^2_{L^\infty(J;L^2(\Omega))} \leq C \norm{\bff}^2_{L^\infty(J;L^2(\Omega))}.
\end{equation}
Summing the estimates  \eqref{addregdivbound1}--\eqref{addregdivbound3} and combining with $(ii)$ and $(iii)$ from Theorem \ref{apriori} produces the estimate $(iii)$. This ends the proof.
\end{proof}
\subsection{End of the proof of Theorem \ref{linWP}:} 
The proof of the first part of Theorem~\ref{linWP} follows the steps below: \\
$\bullet$ Lemma~\ref{apriori} implies that for the sequences $\{\bsig_m \}_0^\infty$, $\{ \bu_n \}_0^\infty$, 
$\{ p_k \}_0^\infty$, $\{ \bw_l \}_0^\infty$, $\{ T_i \}_0^\infty$ and $\{ \br_j \}_0^\infty$ defined by 
\eqref{fdvar}:  $\{\bsig_m \}_0^\infty$ is bounded in $L^\infty(J;H_s(\divr,\Omega)) \cap H^1(J;L^2(\Omega))$, 
$\{ \bu_n \}_0^\infty$ is bounded in $H^1(J;L^2(\Omega))$, 
$\{ p_k \}_0^\infty$ is bounded in $H^1(J;L^2(\Omega))$, 
$\{ \bw_l \}_0^\infty$ is bounded in $L^2(J;H(\divr,\Omega)) \cap L^\infty(J; L^2(\Omega))$, 
$\{ T_i \}_0^\infty$ is bounded in $H^1(J;L^2(\Omega))$, 
and $\{ \br_j \}_0^\infty$ is bounded in $L^2(J;H(\divr,\Omega)) \cap L^\infty(J; L^2(\Omega))$.

By the weak compactness properties of the spaces there exists subsequences (denoted the same way as before) and functions $\bsig \in L^\infty(J;H_s(\divr,\Omega)) \cap H^1(J;L^2(\Omega))$, $\bu \in H^1(J;L^2(\Omega))$, $p \in H^1(J;L^2(\Omega))$, $\bw \in L^2(J;H(\divr,\Omega)) \cap L^\infty(J; L^2(\Omega))$, $T \in H^1(J;L^2(\Omega))$, and $\br \in L^2(J;H(\divr,\Omega)) \cap L^\infty(J; L^2(\Omega))$, such that
\begin{itemize}
	\item $T_i \rightharpoonup T \textnormal{ in }  H^1(J;L^2(\Omega))$,
	\item $\br_j \rightharpoonup \br \textnormal{ in }  L^2(J;H(\divr,\Omega))$,
	\item $p_k \rightharpoonup p \textnormal{ in }  H^1(J;L^2(\Omega))$,
	\item $\bw_l \rightharpoonup \bw \textnormal{ in }  L^2(J;H(\divr,\Omega))$,
	\item $\bsig_m \rightharpoonup \bsig \textnormal{ in } L^2(J;H_s(\divr,\Omega))$, 
	\item $\partial_t \bsig_m \rightharpoonup \partial_t \bsig \textnormal{ in } L^2(J;L^2(\Omega))$,
	\item $\bu_n \rightharpoonup \bu \textnormal{ in }  H^1(J;L^2(\Omega))$.
\end{itemize}
In order to pass to the limit in problem \eqref{fdvar}, we fix a tuple $(i,j,k,l,m,n) \geq 1$ and take $(S,\by, q,\bz,\btau,\bv) \in C^1(J;\Tcal_{i} \times \Rcal_{j} \times \Pcal_{k} \times \Wcal_{l} \times \Scal_{m} \times \Ucal_{n})$ as test functions, and then integrate equations \eqref{fd1} - \eqref{fd6} with respect to time to obtain
\bse
\begin{alignat}{2}
	\int_0^{T_f}\{ (a_0 + a_r)(\partial_t T_i, S) - b_r(\partial_t p_k, S)
	+ \frac{a_r}{2\beta}(\partial_t \bsig_m, S\bid) + (\etab \cdot \bw_l, S) + (\nabla \cdot \br_j,S) \} \dt 
	&= \int_0^{T_f}(h,S)\dt, \label{fdd5}\\
	\int_0^{T_f}\{ (\bth^{-1} \br_j, \by) - (T_i, \nabla \cdot \by) \} \dt &= 0.\label{fdd6}\\
	\int_0^{T_f} \{(c_0 + c_r)(\partial_t p_k, q) - b_r(\partial_t T_i, q) 
	+ \frac{c_r}{2\alpha} (\partial_t \bsig_m, q\bid) + (\nabla \cdot \bw_l,q) \} \dt 
	&= \int_0^{T_f}(g,q)\dt,\label{fdd3}\\
	\int_0^{T_f}\{ (\bK^{-1} \bw_l, \bz) - (p_k,\nabla \cdot \bz) \} \dt &= 0, \label{fdd4}\\
	\int_0^{T_f}\{(\Acal \bsig_m, \btau) + (\bu_n, \nabla \cdot \btau) 
	+ \frac{c_r}{2\alpha}(\bid p_k, \btau) + \frac{a_r}{2\beta}(\bid T_i,\btau) \} \dt &= 0, \label{fdd1}\\
	-\int_0^{T_f}(\nabla \cdot \bsig_m, \bv) \dt &= \int_0^{T_f}(\bff,\bv)\dt. \label{fdd2}
\end{alignat}
\ese
Passing to the limit yields
\bse
\begin{alignat}{2}
	\int_0^{T_f}\{ (a_0 + a_r)(\partial_t T, S) - b_r(\partial_t p, S)
	+ \frac{a_r}{2\beta}(\partial_t \bsig, S\bid) + (\etab \cdot \bw, S) + (\nabla \cdot \br,S) \}\dt 
	&= \int_0^{T_f}(h,S)\dt, \label{var25}\\
	\int_0^{T_f}\{(\bth^{-1} \br, \by) - (T, \nabla \cdot \by) \} \dt &= 0. \label{var26}\\
	\int_0^{T_f}\{ (c_0 + c_r)(\partial_t p, q) - b_r(\partial_t T, q) 
	+ \frac{c_r}{2\alpha} (\partial_t \bsig, q\bid) + (\nabla \cdot \bw,q) \}\dt 
	&= \int_0^{T_f}(g,q)\dt,\label{var23}\\
	\int_0^{T_f}\{ (\bK^{-1} \bw, \bz) - (p,\nabla \cdot \bz) \} \dt &= 0, \label{var24}\\
	\int_0^{T_f}\{ (\Acal \bsig, \btau) + (\bu, \nabla \cdot \btau) 
	+ \frac{c_r}{2\alpha}(\bid p, \btau) + \frac{a_r}{2\beta}(\bid T,\btau)\} \dt &= 0, \label{var21}\\
	-\int_0^{T_f}(\nabla \cdot \bsig, \bv) \dt &= \int_0^{T_f}(\bff,\bv)\dt. \label{var22}
\end{alignat}
\ese
Finally, by the density of the test function space, $C^1(J;\Tcal_{i} \times \Rcal_{j} \times \Pcal_{k} \times \Wcal_{l} \times \Scal_{m} \times \Ucal_{n})$ in\\
 $L^2(J; \Tcal \times \Rcal \times \Pcal \times \Wcal \times \Scal \times \Ucal)$ as $(i, j, k, l, m, n) \rightarrow \infty$, the equations \eqref{linearized} hold true for a.e. $t \in J$. It remains now to show that the initial conditions are satisfied, i.e. $T(0) = T_0$, $\bu(0) = \bu_0$ and $p(0) = p_0$, in the weak sense. To this end, take $q \in C^1(J;\Pcal_k)$ such that $q(T_f) = 0$ as a test function in \eqref{fdd3} and integrate the first term by parts in time
\begin{equation}
	\begin{split}
	&\int_0^{T_f}\{-(c_0 + c_r)(p_k, \partial_t q) - b_r(\partial_t T_i, q) 
	+ \frac{c_r}{2\alpha} (\partial_t \bsig_m, q\bid) + (\nabla \cdot \bw_l,q)\}\dt \\
	&\qquad\qquad\qquad\qquad= \int_0^{T_f}(g,q)\dt + (c_0 + c_r)(p_k(0),q(0)).
	\end{split}
\end{equation}
On the other hand, from \eqref{var23} we obtain
\begin{equation}
	\begin{split}
	&\int_0^{T_f}\{-(c_0 + c_r)(p, \partial_t q) - b_r(\partial_t T, q) 
	+ \frac{c_r}{2\alpha} (\partial_t \bsig, q\bid) + (\nabla \cdot \bw,q)\}\dt \\
	&\qquad\qquad\qquad\qquad= \int_0^{T_f}(g,q)\dt + (c_0 + c_r)(p(0),q(0)).
	\end{split}
\end{equation}
Since $q(0)$ was arbitrary, and since $p_n(0) \rightarrow p_0$ in $L^2(\Omega)$, we get that $p(0) = p_0$. We obtain in the same way that $\bu(0) = \bu_0$, and $T(0) = T_0$. \\
$\bullet$ To finish the proof we show the uniqueness of a weak solution to problem \eqref{linearized}. To this end, assume that $(T_1(t), \br_1(t), p_1(t), \bw_1(t), \bsig_1(t), \bu_1(t))$ and  $(T_2(t), \br_2(t), p_2(t), \bw_2(t), \bsig_2(t), \bu_2(t))$ are two solution tuples in $\Tcal \times \Rcal \times \Pcal \times \Wcal \times \Scal \times \Ucal$, and let $(e_T(t), \be_{\br}(t), e_p(t), \be_{\bw}(t), \be_{\bsig}(t), \be_{\bu}(t))$ be the corresponding difference. This then satisfies the following variational problem: find $(e_T(t), \be_{\br}(t), e_p(t), \be_{\bw}(t), \be_{\bsig}(t), \be_{\bu}(t)) \in \Tcal \times \Rcal \times \Pcal \times \Wcal \times \Scal \times \Ucal$ such that for a.e. $t \in J$ there holds
\bse\label{evar}
	\begin{alignat}{2}
		(a_0 + a_r)(\partial_t e_T, S) - b_r(\partial_t e_p, S) + \frac{a_r}{2\beta}(\partial_t \be_{\bsig}, S\bid) 
		- (\etab \cdot \be_{\bw}, S) + (\nabla \cdot \be_{\br},S) &= 0, 
		&&\quad\forall S \in \Tcal,\label{e5}\\
		(\bth^{-1} \be_{\br}, \by) - (e_T, \nabla \cdot \by) &= 0, 
		&&\quad\forall \by \in \Rcal, \label{e6}\\
		(c_0 + c_r)(\partial_t e_p, q) - b_r(\partial_t e_T, q) + \frac{c_r}{2\alpha} (\partial_t \be_{\bsig}, q\bid) 
		+ (\nabla \cdot \be_{\bw},q) &= 0,
		&&\quad\forall q \in \Pcal, \label{e3}\\
		(\bK^{-1} \be_{\bw}, \bz) - (e_p,\nabla \cdot \bz) &= 0, 
		&&\quad\forall \bz \in \Wcal, \label{e4}\\
		(\Acal \be_{\bsig}, \btau) + (\be_{\bu}, \nabla \cdot \btau) + \frac{c_r}{2\alpha}(\bid e_p, \btau) 
		+ \frac{a_r}{2\beta}(\bid e_T,\btau) &= 0, 
		&&\quad\forall \btau \in \Scal, \label{e1}\\
		(\nabla \cdot \be_{\bsig}, \bv) &= 0,
		&&\quad\forall \bv \in \Ucal, \label{e2}
	\end{alignat}
\ese
together with homogeneous initial conditions.
Take now $\btau = \partial_t \be_{\bsig}$ in \eqref{e1}, differentiate \eqref{e2} with respect to time and set $\bv = \be_{\bu}$, $q = e_p$ in \eqref{e3}, $\bz = \be_{\bw}$ in \eqref{e4}, $S = e_T$ in \eqref{e5}, and $\by = \be_{\br}$ in \eqref{e6}, and add the resulting equations together
\begin{align}
		\nonumber&(c_0 + c_r)\frac{1}{2}\ddt (e_p,e_p) + (a_0 + a_r)\frac{1}{2} \ddt (e_T, e_T) 
		+ (\bK^{-1} \be_{\bw}, \be_{\bw}) + (\bth^{-1} \be_{\br}, \be_{\br}) \\
		&\hspace{5cm}= \frac{1}{2}\ddt(\Acal \be_{\bsig}, \be_{\bsig}) + b_r \ddt (e_p, e_T) - (\etab \cdot \be_{\bw}, e_T).
\end{align}
Integrating from $0$ to $t$ and using the properties of $\bK$ and $\bth$, in addition to the C-S and Young inequalities yields
\begin{align}\label{ehelp1}
		\nonumber&(c_0 + c_r)\frac{1}{2}\norm{e_p(t)}^2 + (a_0 + a_r)\frac{1}{2} \norm{e_T(t)}^2 
		+ \int_0^t \left(k_m \norm{\be_{\bw}(\tau)}^2 + \theta_m \norm{\be_{\br}(\tau)}^2\right)\dtau \\
		&\hspace{2cm}\leq \frac{1}{2} \norm{\be_{\bsig}(t)}^2_{\Acal} + \frac{b_r}{2} \norm{e_p(t)}^2 + \frac{b_r}{2}\norm{e_T(t)}^2
		+ \int_0^t \left(\gamma \frac{\epsilon}{2} \norm{\be_{\bw}(\tau)}^2 +\frac{1}{2\epsilon} \norm{e_T(\tau)}^2 \right)\dtau,
\end{align}
for some $\epsilon > 0$. 

On the other hand, from \eqref{e1} and \eqref{e2} we obtain
\begin{align}
	\nonumber \norm{\be_{\bsig}}^2_{\Acal} &= - \frac{c_r}{2\alpha} (\bid e_p, \be_{\bsig}) + \frac{a_r}{2\beta} (\bid e_T, \be_{\bsig}) \\
	&\leq \left( \frac{c_r}{2\alpha} \frac{\epsilon_1}{2} + \frac{a_r}{2\beta} \frac{\epsilon_2}{2} \right) 2(\mu + \lambda)
	\norm{\be_{\bsig}}^2_{\Acal} + \frac{c_r}{2\alpha} \frac{1}{2\epsilon_1} \norm{e_p}^2 
	+ \frac{a_r}{2\beta} \frac{1}{2\epsilon_2}\norm{e_T}^2.
\end{align}
Choosing $\epsilon_1 = \dfrac{1}{2\alpha}$ and $\epsilon_2 = \dfrac{1}{2\beta}$, we get
\begin{equation}\label{ehelp2}
	\frac{1}{2} \norm{\be_{\bsig}}^2_{\Acal} \leq \frac{c_r}{2} \norm{e_p} + \frac{a_r}{2} \norm{e_T}.
\end{equation}
Combining now \eqref{ehelp1} and \eqref{ehelp2}, and choosing $\epsilon = \dfrac{k_m}{\gamma}$, we get
\begin{equation}
	\begin{split}
		\frac{1}{2} \left( (c_0 - b_r) \norm{e_p(t)}^2 + (a_0 - b_r)\norm{e_T(t)}^2 \right) 
		+ \int_0^t \left(\frac{k_m}{2} \norm{\be_{\bw}(\tau)} + \theta_m \norm{\be_{\br}(\tau)}^2 \right)\dtau 
		\leq \frac{\gamma}{2k_m} \int_0^t \norm{e_T(\tau)}^2 \dtau,
	\end{split}
\end{equation}
which after application of the Gr\"onwall inequality yields
\begin{equation}
\begin{split}
	(c_0 - b_r)\norm{e_p(t)}^2
	+ (a_0 - b_r)\norm{e_T(t)}^2 
	+  \int_0^t\left(k_m \norm{\be_{\bw}(\tau)}^2 
	+ 2\theta_m \norm{\be_{\br}(\tau)}^2\right)\dtau \leq 0.
\end{split}
\end{equation}
Then, using Thomas' Lemma~\ref{thomas} we take $\btau = \tilde{\bsig}(\cdot,t) \in \Scal$ in \eqref{e1}, such that for $t \in J$, $-\nabla \cdot \tilde{\bsig}(t) = \be_{\bu}(t)$ in $\Omega$, with $\norm{\tilde{\bsig}(t)} \leq C \norm{\be_{\bu}(t)}$ for some constant $C > 0$. Thus, we obtain
\begin{align}
	\nonumber \norm{\be_{\bu}}^2 = -(\be_{\bu}, \nabla \cdot \tilde{\bsig})&= (\Acal \be_{\bsig}, \tilde{\bsig}) 
	+ \frac{c_r}{2\alpha}(\bid e_p, \tilde{\bsig}) + \frac{a_r}{2\beta}
	(\bid e_T, \tilde{\bsig}) \\
	&\leq \norm{\tilde{\bsig}} \left( \frac{1}{2\mu} \norm{\be_{\bsig}} 
	+ \frac{c_r}{2\alpha}\norm{e_p} + \frac{a_r}{2\beta}\norm{e_T} \right) \\
	\implies \norm{\be_{\bu}} &\leq C(\norm{\be_{\bsig}} + \norm{e_p} + \norm{e_T}),
\end{align}
where the constant $C>0$ depends on the coefficients, domain and spatial dimension. This implies that $e_T(t) = \be_{\br}(t) = e_p(t) = \be_{\bw}(t) = \be_{\bsig}(t) = \be_{\bu}(t) = 0$, in $\Omega$, for a.e. $t \in J$, implying the uniqueness of a weak solution to problem \eqref{linearized}. Finally, thanks to Lemma~\ref{improvedregularity}, we can finish the proof of the second part 
of Theorem~\ref{linWP} using similar arguments.
\qed
\section{Analysis of the non-linear problem}\label{sec4}
We now consider the analysis of the mixed variational formulation for the fully nonlinear 
problem~\eqref{nonlinvar}. The analysis uses the results derived previously for the linear case,
in addition to the Banach Fixed Point Theorem~\ref{CM} in order to obtain a local solution to \eqref{nonlinvar} in time. We then proceed to extend this local solution by small increments until a global solution is obtained for any finite final time (see e.g.~\cite{hunter1996nonlinear, van2004crystal} where similar techniques are used). Precisely, an iterative solution procedure is introduced based on linearizing the  heat flux term in~\eqref{nonlinenergy},  which is shown to be well-defined, and which converges to the weak solution of the nonlinear problem in adequate norms. Note that we now must require the iterates to be continuous in time, hence we shall invoke Lemma \ref{improvedregularity}. The iterative linearization algorithm we consider is then as follows: let $m \geq 1$, and at the iteration $m$, we 
solve for $(T^{m}, \br^{m}, p^{m}, \bw^{m}, \bsig^{m}, \bu^{m}) \in \Tcal \times \Rcal \times \Pcal \times \Wcal \times \Scal \times \Ucal$ such that for $t \in J$ there holds
\bse\label{linscheme}
	\begin{alignat}{2}
		\nonumber(a_0 + a_r) (\partial_t T^{m},S) - b_r(\partial_t p^{m},S) + \frac{a_r}{2\beta} (\partial_t \bsig^{m}, S \bid)\hspace{2cm}&&&\\ 
		+ (\nabla \cdot \br^{m}, S) + (\bw^{m} \cdot \bth^{-1} \br^{m-1}, S)&= (h,S),  
		&&\quad\forall S \in \Tcal, \label{iter5} \\
		(\bth^{-1} \br^{m}, \by) - (T^{m}, \nabla \cdot \by) &= 0,
		&&\quad\forall \by \in \Rcal, \label{iter6}\\
		(c_0 + c_r)(\partial_t p^{m}, q) - b_r(\partial_t T^{m}, q)+ \frac{c_r}{2\alpha} (\partial_t \bsig^{m}, q\bid) 
		+ (\nabla \cdot \bw^{m},q) &= (g,q),
		&&\quad\forall q \in \Pcal, \label{iter3}\\
		(\bK^{-1} \bw^{m}, \bz) - (p^{m},\nabla \cdot \bz) &= 0, 
		&&\quad\forall \bz \in \Wcal, \label{iter4}\\
		(\Acal \bsig^{m}, \btau) + (\bu^{m}, \nabla \cdot \btau) + \frac{c_r}{2\alpha}(\bid p^{m}, \btau) 
		+ \frac{a_r}{2\beta}(\bid T^{m},\btau) &= 0, 
		&&\quad\forall \btau \in \Scal,\label{iter1}\\
		-(\nabla \cdot \bsig^{m}, \bv) &= (\bff,\bv), 
		&&\quad\forall \bv \in \Ucal, \label{iter2}
	\end{alignat}
\ese
together with initial conditions, \eqref{weakICs}, and where the algorithm is initialized by given initial guess $\br^0$.
 We consider the following hypothesis on the heat flux:
\begin{Hyp}[The heat flux]\label{assumpdata2}
 We suppose that for all $m \geq 1$, the heat flux is such that $\br^m(t) \in L^\infty(\Omega)$, for $t\in J$.
\end{Hyp}
The above hypothesis is a natural one, and it is necessary for the solution to the iterative procedure \eqref{linscheme} to be well-defined for each $m \geq 1$. This hypothesis is satisfied with sufficiently regular data and domain boundary. We provide some formal arguments in Appendix~\ref{appendix} on the specific requirements such that the solution to the problem \eqref{linearized} yields $\br \in C([0,T_f],L^\infty(\Omega))$ (or alternatively $\bw, \br \in C([0,T_f];L^4(\Omega))$), thus making the above hypothesis superfluous. We delegate this discussion to the Appendix in order to avoid overly strict assumptions on the data. 
\begin{remark}
Note that if we we had approximated the convective term in equation \eqref{iter5} 
instead as $(\bw^{m-1} \cdot \bth^{-1} \br^{m}, S)$, Hypothesis~\ref{assumpdata2}
would be on the regularity of the Darcy flux $\bw$, and the above algorithm would be initialized by some $\bw^0$. The analysis presented next remains true and follows exactly the same lines.
\end{remark}
Based on the development of the previous sections, we now state the main result of this article.
\begin{theorem}\label{nonlinthm}
	Assume that $\bff$ is in $H^2(J;L^2(\Omega))$, $g,h$ in $H^1(J;L^2(\Omega))$, 
	$p_0, T_0$ in $H^1_0(\Omega), \textnormal{ and } \bu_0$ in $(L^2(\Omega))^d$,
	then the algorithm \eqref{linscheme}, initialized by any $\br^0 \in C([0,T_f];L^\infty(\Omega))$, 
	defines a unique sequence of iterates
\begin{subequations}
	\begin{align}
		&(T^{m}, \br^{m}) \in W^{1,\infty}(J;L^2(\Omega)) \times
		\left( L^\infty(J;H(\divr;\Omega)) \cap H^1(J;L^2(\Omega)) \right),\\
		&(p^{m}, \bw^{m}) \in W^{1,\infty}(J;L^2(\Omega)) \times
		\left( L^\infty(J;H(\divr;\Omega)) \cap H^1(J;L^2(\Omega)) \right),\\
		&(\bu^{m},\bsig^{m}) \in W^{1,\infty}(J;L^2(\Omega)) 
		\times\left(  L^\infty(J;H_s(\divr;\Omega))\cap W^{1,\infty}(J;L^2(\Omega))\right), 
	\end{align}
\end{subequations}
that converges to the weak solution $(T, \br, p, \bw, \bsig, \bu)$ of \eqref{nonlinvar}, admitting the following regularity
\begin{subequations}
	\begin{align}
		&(T, \br) \in H^1(J;L^2(\Omega)) \times \left(L^2(J;H(\divr;\Omega)) \cap L^\infty(J;L^2(\Omega)) \right),\\
		&(p, \bw) \in H^1(J;L^2(\Omega)) \times \left( L^2(J;H(\divr;\Omega)) \cap L^\infty(J;L^2(\Omega)) \right),\\
		&(\bu,\bsig) \in H^1(J;L^2(\Omega)) 
		\times \left(L^2(J;H_s(\divr;\Omega))\cap H^1(J;L^2(\Omega))\right).
	\end{align}
\end{subequations} 
\end{theorem}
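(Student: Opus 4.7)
The plan is to establish Theorem~\ref{nonlinthm} by combining three ingredients: well-posedness of each linear step, a Banach contraction on a short time interval, and piecewise continuation to the full interval $[0,T_f]$. First, I would observe that, given the previous iterate $\br^{m-1}\in C([0,T_f];L^\infty(\Omega))$ furnished by Hypothesis~\ref{assumpdata2}, the $m$-th iteration \eqref{linscheme} is nothing but the linear system \eqref{linearized} with $\etab := \bth^{-1}\br^{m-1}\in L^\infty(Q)$; the fact that $\etab$ now depends on time has no effect on the arguments, since the a priori estimates only use the uniform bound $\gamma=\|\etab\|_\infty$. Theorem~\ref{linWP} (in its improved-regularity form, applicable under the data assumptions of Theorem~\ref{nonlinthm}) then yields existence, uniqueness, and the stated Bochner regularity of $(T^m,\br^m,p^m,\bw^m,\bsig^m,\bu^m)$, while Hypothesis~\ref{assumpdata2} is precisely what enables the iteration to continue.

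Next, I would set up the difference equations. Writing $\delta\psi^m:=\psi^m-\psi^{m-1}$ for each unknown, subtraction of two consecutive iterations produces a linear system in the differences whose only non-trivial coupling to the previous step comes from the split
\begin{equation*}
\bw^{m}\cdot\bth^{-1}\br^{m-1}-\bw^{m-1}\cdot\bth^{-1}\br^{m-2}
=\delta\bw^m\cdot\bth^{-1}\br^{m-1}+\bw^{m-1}\cdot\bth^{-1}\delta\br^{m-1}.
\end{equation*}
The first summand is handled exactly as in the uniqueness proof for Theorem~\ref{linWP}, since $\br^{m-1}\in L^\infty$ plays the role of $\etab$. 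Testing the subtracted system with $\partial_t\delta\bsig^m$, $\delta p^m$, $\delta\bw^m$, $\delta T^m$, $\delta\br^m$, and with $\delta\bu^m$ in the time-differentiated divergence equation, then summing, and invoking the coefficient constraints \eqref{constraintcoeff}, the Cauchy--Schwarz and Young inequalities, the auxiliary bound $\|\delta\bsig^m\|^2 \leq C(\|\delta p^m\|^2+\|\delta T^m\|^2)$ proved as in \eqref{ehelp2}, and the uniform bounds on $\bw^{m-1}$ supplied by Lemma~\ref{improvedregularity}, I expect an inequality of the schematic form
\begin{equation*}
\sup_{[0,t^\ast]}\bigl(\|\delta p^m\|^2+\|\delta T^m\|^2\bigr)+\int_0^{t^\ast}\bigl(\|\delta\bw^m\|^2+\|\delta\br^m\|^2\bigr)\,\dtau
\leq C\,t^\ast\sup_{[0,t^\ast]}\|\delta\br^{m-1}\|_{L^\infty(\Omega)}^2,
\end{equation*}
where $C$ depends only on the data and on the uniform $L^\infty$-bound on $\br^{m-1}$. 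For $t^\ast>0$ sufficiently small, this is a strict contraction of the iteration map in an appropriate Banach space built from these norms, and Theorem~\ref{CM} delivers a unique local-in-time fixed point, which is the sought weak solution on $[0,t^\ast]$.

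To cover $[0,T_f]$, I would restart the construction from $t=t^\ast$, using the terminal values as initial data---these satisfy the required regularity by the first step---and iterate finitely many times, the length of each step being uniformly bounded below thanks to Hypothesis~\ref{assumpdata2}. Passage to the limit $m\to\infty$ in \eqref{linscheme} to recover \eqref{nonlinvar} is then carried out by the same weak compactness arguments used at the end of the proof of Theorem~\ref{linWP}; the only nonlinear term, $\bw^m\cdot\bth^{-1}\br^{m-1}$, converges to $\bw\cdot\bth^{-1}\br$ thanks to the strong $L^2$-convergence of $\bw^m$ and $\br^m$ provided by the contraction. The main obstacle I anticipate is the cross-term $\bw^{m-1}\cdot\bth^{-1}\delta\br^{m-1}$ tested against $\delta T^m$: with only $L^2$-regularity on $\bw^{m-1}$ and $\delta T^m$, one cannot close the estimate without an $L^\infty$-bound on $\delta\br^{m-1}$, and it is precisely here that Hypothesis~\ref{assumpdata2} (propagated to the difference $\delta\br^{m-1}$ by the triangle inequality) is indispensable. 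Calibrating the Young splits so that the contraction constant is genuinely less than one, uniformly in $m$, once $t^\ast$ is chosen small, is the most delicate bookkeeping step.
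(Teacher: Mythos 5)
Your three-step architecture---linear well-posedness via Theorem~\ref{linWP} with improved regularity, a Banach contraction on a short time interval, piecewise continuation to $[0,T_f]$---matches the paper's, and your decomposition of the convective cross-term is one of the two correct splittings. However, the contraction estimate you write down cannot be iterated, because its two sides use incompatible norms: you place $L^2$-type norms of the $m$-th differences on the left but $\sup_{t}\norm{\delta\br^{m-1}(t)}_{L^\infty(\Omega)}^2$ on the right. Nothing on the left controls $\norm{\delta\br^m}_{L^\infty(\Omega)}$, so the inequality does not close under iteration. Worse, your proposed remedy---bounding $\norm{\delta\br^{m-1}}_{L^\infty(\Omega)}$ by the triangle inequality from Hypothesis~\ref{assumpdata2}---yields only a uniform-in-$m$ \emph{constant}, so the right-hand side does not shrink as $m\to\infty$; the resulting map is merely bounded, not contractive, and Theorem~\ref{CM} does not apply.

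The repair, and what the paper actually does in \eqref{contr}--\eqref{ptet}, is to put the $L^\infty(\Omega)$ control on the \emph{non-error} factor of the cross-term. One estimates
\begin{equation*}
\left|(\bw^{m-1}\cdot\bth^{-1}\delta\br^{m-1},\partial_t\delta T^m)\right|
\;\leq\; \norm{\bw^{m-1}}_{L^\infty(\Omega)}\,\theta_M\,\norm{\delta\br^{m-1}}\,\norm{\partial_t\delta T^m},
\end{equation*}
so that after Young's inequality, Gr\"onwall's lemma and a second time-integration the sequence contracts in a self-consistent norm:
\begin{equation*}
\int_0^{t_1}\norm{\be_{\br}^m(\tau)}^2\,\dtau \;\leq\; t_1\,C_{\textnormal{contr}}\int_0^{t_1}\norm{\be_{\br}^{m-1}(\tau)}^2\,\dtau .
\end{equation*}
Note this requires a uniform-in-time spatial $L^\infty$ bound on the Darcy flux as well as on the heat flux. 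Your appeal to ``the uniform bounds on $\bw^{m-1}$ supplied by Lemma~\ref{improvedregularity}'' does not provide this: that lemma gives $\bw^m\in L^\infty(J;H(\divr,\Omega))$, and $H(\divr,\Omega)$ does not embed in $L^\infty(\Omega)$ for $d\geq 2$. The missing spatial regularity is precisely what the paper's constants $\gamma_1,\gamma_2$ tacitly encode and what Appendix~\ref{appendix} is designed to supply from additional regularity of the data.
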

\begin{proof}
According to Theorem \ref{linWP} and regarding Hypothesis~\ref{assumpdata2}, 
the iterates $(T^{m}, \br^{m}, p^{m}, \bw^{m}, \bsig^{m}, \bu^{m})$ are well-defined for all $m\geq1$, admitting the improved regularity specified in Lemma \ref{improvedregularity}. In particular, this guarantees continuity in time for the iterates. Keeping this in mind, we define $\gamma_1 := \sup_{t \in J} \norm{\be_{\bw}^{m}(t)}^2$ and $\gamma_1 := \sup_{t \in J} \norm{\be_{\br}^{m}(t)}^2$. It remains to show the convergence of the iterates to the weak solution of~\eqref{nonlinvar} using suitable norms. To this aim, let $m\geq2$, and take the difference of equations \eqref{linscheme} at the iteration step $m$, with the corresponding equations at iteration step $m-1$ to obtain the following problem: find $(e_T^{m}, \be_{\br}^{m}, e_p^{m}, \be_{\bw}^{m}, \be_{\bsig}^{m}, \be_{\bu}^{m}) \in \Tcal \times \Rcal \times \Pcal \times \Wcal \times \Scal \times \Ucal$ such that for $t \in J$ there holds
\bse\label{erroreqs}
\begin{alignat}{2}
	\nonumber(a_0 + a_r)(\partial_t e_T^{m}, S) - b_r(\partial_t e_p^{m}, S) + \frac{a_r}{2\beta} (\partial_t \be_{\bsig}^{m}, S \bid) + (\nabla \cdot \be_{\br}^{m}, S)\\  
	\qquad \qquad \qquad \qquad \qquad \qquad- (\bw^{m} \cdot \bth^{-1} \be_{\br}^{m-1}, S) 
	- (\be_{\bw}^{m} \cdot \bth^{-1} \br^{m-1}, S)  &= 0,&&\quad\forall S \in \Tcal, \label{itere5} \\
	(\bth^{-1} \be_{\br}^{m}, \by) - (e_T^{m}, \nabla \cdot \by) &= 0,&&\quad\forall \by \in \Rcal \label{itere6}\\
	(c_0 + c_r)(\partial_t e_p^{m}, q) - b_r(\partial_t e_T^{m}, q) + \frac{c_r}{2\alpha} (\partial_t \be_{\bsig}^{m}, q\bid) 
	+ (\nabla \cdot \be_{\bw}^{m}, q) &= 0,&&\quad \forall q \in \Pcal,\label{itere3}\\
	(\bK^{-1} \be_{\bw}^{m}, \bz) - (e_p^{m}, \nabla \cdot \bz) &= 0,&&\quad \forall \bz \in \Wcal, \label{itere4} \\
	(\Acal \be_{\bsig}^{m}, \btau) + (\be_{\bu}^{m}, \nabla \cdot \btau) + \frac{c_r}{2\alpha} (\bid e_p^{m}, \btau) 
	+ \frac{a_r}{2\beta} (\bid e_T^{m} , \btau) &= 0,&&\quad \forall \btau \in \Scal, \label{itere1}\\
	-(\nabla \cdot \be_{\bsig}^{m}, \bv) &= 0,&&\quad \forall \bv \in \Ucal, \label{itere2}
\end{alignat}
together with homogeneous initial conditions, i.e., 
\begin{equation}\label{weakICserror}
(e_T^{m}(0),S) =0, \quad \forall  \in \Tcal, \quad ( \be^{m}_{\bu}(0),\bv) = 0, \quad \forall \bv \in \Ucal, \quad \textnormal{ and } \quad
(e^{m}_{p}(0),q) = 0, \quad \forall q \in \Pcal.
\end{equation}
\ese
The solution tuple 
$(e_T^{m}, \be_{\br}^{m}, e_p^{m}, \be_{\bw}^{m}, \be_{\bsig}^{m}, \be_{\bu}^{m})$ denotes 
the error functions between the solution to~\eqref{linscheme} at the ${m}^{\textnormal{th}}$ and $(m-1)^{\textnormal{th}}$ iterations, i.e. $e_T^{m} = T^{m} - T^{m-1}$, and similarly for the other variables. First, take $\btau = \be_{\bsig}^{m}$ and $\bv = \be_{\bu}^{m}$ in equations \eqref{itere1} and \eqref{itere2}, respectively, and sum to obtain
\begin{equation}
\begin{split}
\norm{\be_{\bsig}^{m}}^2_{\Acal} &= - \frac{c_r}{2\alpha} (\bid e_p^{m}, \be_{\bsig}^{m}) - \frac{a_r}{2\beta} (\bid e_T^{m}, \be_{\bsig}^{m}) \\
&\leq \left( \alpha \frac{\epsilon_1}{2} + \beta \frac{\epsilon_2}{2} \right) \norm{\be_{\bsig}^{m}}^2_{\Acal} 
+ \frac{c_r}{2\alpha} \frac{1}{2\epsilon_1} \norm{e_p^{m}}^2 + \frac{a_r}{2\beta} \frac{1}{2\epsilon_1} \norm{e_T^{m}}^2.
\end{split}
\end{equation}
Setting $\epsilon_1 = \dfrac{1}{2\alpha}$ and $\epsilon_2 = \dfrac{1}{2\beta}$ yields
\begin{equation}\label{esigbound}
\norm{\be_{\bsig}^{m}}^2_{\Acal} \leq c_r\norm{e_p^{m}}^2 + a_r\norm{e_T^{m}}^2.
\end{equation}
Similarly, by differentiating equations \eqref{itere1} and \eqref{itere2} with respect to time and setting $\btau = \pt \be_{\bsig}^{m}$ and $\bv = \pt \be_{\bu}^{m}$ we obtain
\begin{equation}\label{ptesigbound}
\norm{\pt \be_{\bsig}^{m}}^2_{\Acal} \leq c_r\norm{\pt e_p^{m}}^2 + a_r\norm{\pt e_T^{m}}^2.
\end{equation}
Using Thomas' lemma~\ref{thomas}, we take $\btau = \tilde{\bsig}(\cdot, t)$ in equation \eqref{itere1} such that $\be_{\bu}^{m}(\cdot, t) = \nabla \cdot \tilde{\bsig}(\cdot, t)$ with $\norm{\tilde{\bsig}(t)} \leq C \norm{\be_{\bu}^{m}(t)}$ for $t \in J$, and combine with \eqref{esigbound} to obtain
\begin{equation}\label{eubound}
	\norm{\be_{\bu}^{m}}^2 \leq C\left(\norm{e_p^{m}}^2 + \norm{e_T^{m}}^2\right),
\end{equation}
and similarly using \eqref{ptesigbound}
\begin{equation}\label{pteubound}
	\norm{\partial_t \be_{\bu}^{m}}^2 \leq C\left(\norm{\partial_t e_p^{m}}^2 
	+ \norm{\partial_t e_T^{m}}^2\right),
\end{equation}
where the constants $C>0$ are independent of $m$. Now, write $\nabla \cdot \be_{\br}^{m}(t) = \sum_{\ell=1}^\infty \zeta_\ell(t) S_\ell$ for some functions $\zeta_\ell(t) \in \real$, where $\spann \{S_\ell : 1\leq \ell \leq \infty\} = \Tcal$. Then, we take $S_\ell$ as a test function in equation \eqref{itere5}, multiply by $\zeta_\ell$ and sum over $\ell = 1,...,k$ to obtain
\begin{align}
	\nonumber(\nabla \cdot \be_{\br}^{m}, \sum_{\ell=1}^k \zeta_\ell S_\ell) &=
	 b_r (\partial_t e_p^{m}, \sum_{\ell=1}^k \zeta_\ell S_\ell) 
	 - (a_0 + a_r) (\partial_t e_T^{m}, \sum_{\ell=1}^k \zeta_\ell S_\ell)
	 - \frac{a_r}{2\beta} (\partial_t \be_{\bsig}^{m}, \sum_{\ell=1}^k \zeta_\ell S_\ell) \\
	&\hspace{1cm}+ (\bw^{m} \cdot \bth^{-1} \be_{\br}^{m-1}, \sum_{\ell=1}^k \zeta_\ell S_\ell) 
	+ (\be_{\bw}^{m} \cdot \bth^{-1} \br^{m-1}, \sum_{\ell=1}^k \zeta_\ell S_\ell). 
\end{align}
Using the C-S and Young inequalities, tending $k \rightarrow \infty$, and using also the estimate \eqref{ptesigbound} we get
\begin{equation}\label{edivrbound}
\begin{split}
	\norm{\nabla \cdot \be_{\br}^{m}}^2 \leq &C\left(\norm{\partial_t e_p^{m}}^2 + \norm{\partial_t e_T^{m}}^2 
	+ \norm{\be_{\bw}^{m}}^2 + \norm{\be_{\br}^{m-1}}^2\right).
\end{split}
\end{equation}
In the same way we get from equation \eqref{itere3} that
\begin{equation}\label{edivwbound}
\begin{split}
	\norm{\nabla \cdot \be_{\bw}^{m}}^2 \leq &C\left(\norm{\partial_t e_T^{m}}^2 + \norm{\partial_t e_p^{m}}^2 \right),
\end{split}
\end{equation}
where the constants $C>0$ depends on $\gamma_1$ and $\gamma_2$ but is independent of $m$. From \eqref{itere2} we also have that
\begin{equation}\label{edivsigbound}
\norm{\nabla \cdot \be_{\bsig}^{m}}^2 = 0.
\end{equation}
We continue by setting $S = e_T^{m}, \by = \be_{\br}^{m}, q = e_p^{m}, \bz = \be_{\bw}^{m}, \btau = \pt \be_{\bsig}^{m}$ in equations \eqref{itere5}--\eqref{itere1}, and differentiate equation \eqref{itere2} with respect to time and set $\bv = \be_{\bu}^{m}$. Summing the resulting equations yields
\begin{equation}
\begin{split}
(c_0 - b_r) \frac{1}{2} \ddt \norm{e_p^{m}}^2 &+ (a_0 - b_r) \frac{1}{2} \ddt \norm{e_T^{m}}^2 
+ k_m \norm{\be_{\bw}^{m}}^2 + \theta_m \norm{\be_{\br}^{m}}^2 \\
&\qquad \leq \gamma_1 \frac{\theta_M}{2} \norm{\be_{\br}^{m-1}}^2 + \gamma_2 \theta_M\frac{\epsilon}{2} \norm{\be_{\bw}^{m}}^2
+ \left(\frac{1}{2} + \frac{1}{2\epsilon} \right)\norm{e_T^{m}}^2,
\end{split}
\end{equation}
where we also used the estimate \eqref{ptesigbound}. Integrating from $0$ to $t$, applying the Gr\"onwall inequality and setting $\epsilon = \dfrac{k_m}{\gamma_2 \theta_M}$ yields
\begin{equation}\label{epetbound}
\begin{split}
(c_0 - b_r) \norm{e_p^{m}(t)}^2 + (a_0 - b_r) \norm{e_T^{m}(t)}^2 
+ &\int_0^t \left( k_m\norm{\be_{\bw}^{m}(\tau)}^2 + \theta_m\norm{\be_{\br}^{m}(\tau)}^2\right)\dtau\\
&\hspace{4cm}\leq C\int_0^t \norm{\be_{\br}^{m-1}(\tau)}^2\dtau,
\end{split}
\end{equation}
for some constant $C>0$ independent of $m$. Take now $S = \partial_t e_T^{m}$ and $q = \partial_t e_p^{m}$ in equations \eqref{itere5} and \eqref{itere3}, respectively. Then, differentiate equations \eqref{itere1} and \eqref{itere2} with respect to time and let $\btau = \partial_t \bsig^{m}$ and $\bv = \partial_t \bu^{m}$. Finally, we let $\by = \partial_t \be_{\br}^{m}$ and $\bz = \partial_t \be_{\bw}^{m}$ in equations \eqref{itere6} and \eqref{itere4}, respectively. Summing yields
\begin{equation}
\begin{split}
&(c_0 + c_r - b_r) \norm{\pt e_p^{m}}^2 + (a_0 + a_r - b_r) \norm{\pt e_T^{m}}^2 
+ \frac{k_m}{2} \ddt \norm{\be_{\bw}^{m}}^2 + \frac{\theta_m}{2} \ddt \norm{\be_{\br}^{m}}^2\\
&\qquad \leq \norm{\pt \be_{\bsig}^{m}}^2_{\Acal} + (\bw^{m} \cdot \bth^{-1} \be_{\br}^{m-1}, \pt e_T^{m}) + (\be_{\bw}^{m} \cdot \bth^{-1} \br^{m-1}, \pt e_T^{m})\\
&\qquad \leq \norm{\pt \be_{\bsig}^{m}}^2_{\Acal} + \left(\frac{\epsilon_1}{2} + \frac{\epsilon_2}{2} \right) \norm{\pt e_T^{m}}^2
+ \gamma_1 \theta_M \frac{1}{2\epsilon_1} \norm{\be_{\bw}^{m}}^2 + \gamma_2 \theta_M \frac{1}{2\epsilon_2} \norm{\be_{\br}^{m-1}}^2,
\end{split}
\end{equation}
for some $\epsilon_1, \epsilon_2 > 0$. Combining this with the previous estimate \eqref{ptesigbound} and setting $\epsilon_1 = \epsilon_2 = \dfrac{\alpha \beta}{\mu + \lambda}$ leads to
\begin{align}\label{contr}
\nonumber&(c_0 - b_0) \norm{\pt e_p^{m}}^2 + (a_0 - b_0) \norm{\pt e_T^{m}}^2 
+ \frac{k_m}{2} \ddt \norm{\be_{\bw}^{m}}^2 + \frac{\theta_m}{2} \ddt \norm{\be_{\br}^{m}}^2\\
&\hspace{6cm}\leq  \frac{\theta_M}{2} \frac{\mu + \lambda}{\alpha \beta} 
\left(\gamma_1 \norm{\be_{\bw}^{m}}^2 + \gamma_2 \norm{\be_{\br}^{m-1}}^2\right).
\end{align}
Integrating from $0$ to $t$ and applying the Gr\"onwall inequality yields
\begin{align}\label{ptet}
\nonumber&(c_0 - b_0)\int_0^t \norm{\pt e_p^{m}(\tau)}^2\dtau + (a_0 - b_0) \int_0^t\norm{\pt e_T^{m}(\tau)}^2 \dtau
+ \frac{k_m}{2} \norm{\be_{\bw}^{m}(t)}^2 + \frac{\theta_m}{2}  \norm{\be_{\br}^{m}(t)}^2\\
&\hspace{3cm}\leq  \frac{\xi \gamma_2}{2} \exp \left(\dfrac{\xi \gamma_1}{k_m} T_f\right)  \int_0^t\norm{\be_{\br}^{m-1}(\tau)}^2\dtau 
\leq   \frac{\xi \gamma_2}{2} \exp\left(\dfrac{\xi \gamma_1}{k_m} T_f\right)  \int_0^{t_1}\norm{\be_{\br}^{m-1}(\tau)}^2\dtau .
\end{align}
for $t \leq t_1$ where $t_{1} > 0$ will be fixed later, and where $\xi = \theta_M \dfrac{\mu + \lambda}{\alpha \beta}$. Integrating in time once more from $0$ to $t_{1}$ yields
\begin{equation}
\int_0^{t_1} \norm{\be_{\br}^{m}(\tau)}^2\dtau 
\leq t_1 C_{\textnormal{contr}}\int_0^{t_1}\norm{\be_{\br}^{m-1}(\tau)}^2\dtau,
\end{equation}
where the constant $C_{\textnormal{contr}} = \dfrac{\xi \gamma_2}{2} \exp\left(\dfrac{\xi \gamma_1}{k_m} T_f\right)$ is such that $ 0 < C_{\textnormal{contr}} < \infty$ provided $T_f < \infty$, and is independent of $m$ and of  the local final time $t_{1}$. 
Thus, for $t_1= \dfrac{1}{2C_{\textnormal{contr}}}$ the above expression implies that the map $\be_{\br}^{m-1}(t) \mapsto \be_{\bw}^{m}(t)$ is a contraction map for $t \in (0,t_1]$. In particular, this implies that as $m \rightarrow \infty$ we have from Theorem \ref{CM} and \eqref{esigbound}--\eqref{pteubound}, \eqref{edivrbound}--\eqref{edivsigbound}, \eqref{epetbound} and \eqref{ptet} the following convergences
\begin{itemize} 
\item
$\be_{\bw}^{m}, \be_{\br}^{m} \rightarrow 0$ in $L^2(0,t_1; H(\divr,\Omega)) \cap L^\infty(0,t_1; L^2(\Omega))$,
\item
$e_p^{m}, e_T^{m} \rightarrow 0$ in $H^1(0,t_1;L^2(\Omega))$,
\item
$\be_{\bsig}^{m} \rightarrow 0$ in $H^1(0,t_1;L^2(\Omega)) \cap L^2(0,t_1;H_s(\divr,\Omega))$,
\item
$\be_{\bu}^{m} \rightarrow 0$ in $H^1(0,t_1; L^2(\Omega))$.
\end{itemize}
Therefore, the existence of the solution to problem \eqref{nonlinvar} is established for $t \in (0,t_1]$. 
The question now is how  
to continue the local solution $(T, \br, p, \bw, \bsig, \bu)$ to the 
system~\eqref{nonlinvar} globally in time. To this aim, we let 
$(T^{m}, \br^{m}, p^{m}, \bw^{m}, \bsig^{m}, \bu^{m})$ be the solution 
of~\eqref{linscheme} on the time interval $[t_{k-1},t_{k}]$, $k\in\nat$, with  $t_{k}-t_{k-1}=\dfrac{1}{2C_{\textnormal{contr}}}$, and 
starting with the initial 
data $(T^{m}, \br^{m}, p^{m}, \bw^{m}, \bsig^{m}, \bu^{m})(\cdot,t_{k-1})=(T, \br, p, \bw, \bsig, \bu)|_{[t_{k-2},t_{k-1}]}(\cdot,t_{k-1})$; 
thanks to the continuity in-time of the convergent solution. 
The iterates $(T^{m}, \br^{m}, p^{m}, \bw^{m}, \bsig^{m}, \bu^{m})$ are again 
well-defined using Theorem \ref{linWP} and  Hypothesis~\ref{assumpdata2}. The iterates also 
result  a contraction, i.e,
\begin{equation}
\int_{t_{k-1}}^{t_{k}} \norm{\be_{\br}^{m}(\tau)}^2\dtau 
\leq \dfrac{1}{2}\int_{t_{k-1}}^{t_{k}}\norm{\be_{\br}^{m-1}(\tau)}^2\dtau,\quad \forall k\geq 2.
\end{equation}
Therefrom, we proceed as on done in the first time interval $[0,t_{1}]$
to show  the convergence of the successive approximations  
$(T^{m}, \br^{m}, p^{m}, \bw^{m}, \bsig^{m}, \bu^{m})|_{[t_{k-1},t_{k}]}$, $k\in\nat$, to 
$(T, \br, p, \bw, \bsig, \bu)|_{[t_{k-1},t_{k}]}$. This solution is 
similarly   extended to any  time $t_{\ell}\geq t_{k}$ given by 
$$t_{\ell}=\sum_{k=1}^{\ell}t_{k}-t_{k-1} = \sum_{k=1}^{\ell}\frac{1}{2C_{\textnormal{contr}}}.$$
Finally, since the  series 
$\sum_{k=1}^{\infty}\frac{1}{2C_{\textnormal{contr}}}$ diverges, the sequence
of local solutions is extended to arbitrary finite final time $0 <T_f < \infty$ by incrementing the values $\ell$ (if $T_f$ is not identically an integer multiple of $\frac{1}{2C_{\textnormal{contr}}}$ take instead $t_k - t_{k-1} = \frac{1}{NC_{\textnormal{contr}}}$ where $N>1$). This concludes the proof of Theorem \ref{nonlinthm}.\end{proof}
\begin{remark}
The choice of the iterative procedure defined in \eqref{linscheme} is not the only possible choice. For instance, we could also define a fully explicit scheme where both the Darcy and heat fluxes in the convective term are given at the previous iteration. If such an explicit scheme was chosen we would have the advantage of a symmetric linearized problem, as the convective terms in the iterative procedure can be viewed as part of the source term on the right hand side. However, it is well-known that in a practical setting explicit schemes has slower convergence rate than implicit schemes, and therefore we chose the latter.
\end{remark}
\begin{remark}
Assume that $\bff$ is in $H^1(J;L^2(\Omega))$, $g,h$ in $L^{^2}(J;L^2(\Omega))$, 
$p_0, T_0$ in $H^1_0(\Omega), \textnormal{ and } \bu_0$ in $(L^2(\Omega))^d$.  Suppose 
that instead of Hypothesis\ref{assumpdata2}, we have $\br^m$, $\bw^m$  in $\in H^1(0,T;L^{\infty}(\Omega))$.
Then, we can reproduce the proof of Theorem~\ref{nonlinthm} to prove the convergence 
of the scheme given  by~\eqref{linscheme} to a weak solution of the nonlinear 
problem~\eqref{nonlinvar}.
\end{remark}
\section{Conclusions}\label{concl}
In this article we have given mixed formulations
for  the   fully coupled  quasi-static thermo-poroelastic model~\cite{brun2018thporo}. The model in nonlinear, with the nonlinearity appearing on a coupling term. This makes the analysis very challenging. A linearization of the model was therefore employed as an intermediate step in analyzing the full nonlinear model. For the linear case, the well-posedness 
is established using the theory if DAEs, energy estimates, and Gr\"onwall's lemma,
together with a Galerkin method. This result together with 
derived energy estimates are combined with the Banach Fixed Point Theorem to obtain local solutions in time of the nonlinear problem. Due to the continuity in time of the convergent (local) solutions, we can infer a (global) convergence proof of an iterative procedure, designed to produce the weak solution 
to the original nonlinear problem. Work underway addresses discretization 
of this model problem using an appropriate  mixed finite element method as well as 
\textit{a priori} and \textit{a posteriori} 
error analysis, the same way as in~\cite{ahmed:hal-01687026}.
\appendix
\section{Alternative to Hypothesis 1}\label{appendix}
We outline some formal calculations which reveal the assumptions necessary on the data in order to avoid the Hypothesis \ref{assumpdata2}. In particular, we aim to solve the linear problem \ref{linearized} with sufficiently regular data such that $\br \in C([0,T_f];L^\infty(\Omega))$ (or, alternatively such that $\bw, \br \in C([0,T_f];L^4(\Omega))$. The following arguments indicate that this is easily done. First, note that from Theorem \ref{linWP} and the Sobolev Embedding Theorem it follows that the functions $(T(t), \br(t), p(t), \bw(t), \bsig(t), \bu(t))$ are continuous for $t \in [0,T_f]$, if $g,h \in H^1(J;L^2(\Omega))$ and $\bff \in H^2(J;L^2(\Omega))$. Thus, going back to the problem \eqref{linearized}, we can choose smooth test functions with compact support in $\Omega$ and find that $(T, \br, p, \bw, \bsig, \bu)$ solves the following initial boundary value problem
\bse
 	\begin{alignat}{2}
 	a_0 \tderiv{T}(t) - b_0 \tderiv{p}(t) + \frac{a_r}{2\beta} \tderiv{\tr{\bsig}}(t) - \etab \cdot \bw(t) + \nabla \cdot \br(t) &= h(t),\quad
 	&&\textnormal{in } \Omega, \label{A:heat}\\
 	\bth^{-1} \br(t) + \nabla T(t) &= 0,\quad 
 	&&\textnormal{in } \Omega, \label{A:heatflux}\\
 	c_0 \tderiv{p}(t) - b_0 \tderiv{T}(t) + \frac{c_r}{2\alpha} \tderiv{\tr{\bsig}}(t) + \nabla \cdot \bw(t) &= g(t),\quad
 	&&\textnormal{in } \Omega, \label{A:pressure}\\
 	\bK^{-1} \bw(t) + \nabla p(t) &= 0,\quad
 	&&\textnormal{in } \Omega,\label{A:darcyflux} \\
 	\Acal \bsig(t) - \bep(\bu)(t) + \frac{c_r}{2\alpha}\bid p(t) + \frac{a_r}{2\beta}\bid T(t)&= 0,\quad
 	&&\textnormal{in } \Omega,  \\
 	-\nabla \cdot \bsig(t) &= \bff(t),\quad &&\textnormal{in } \Omega, 		
 	\end{alignat}
for a.e. $t \in J$, and with boundary conditions
\begin{equation}
T = 0, \quad \bu = 0, \quad p = 0, \quad \textnormal{on } \Gamma \times J,
\end{equation}
and initial conditions
\begin{equation}
T(0) = T_0, \quad \bu(0) = \bu_0, \quad \textnormal{and } \quad p(0) = p_0, \quad \textnormal{in } \Omega \times \{0\}.
\end{equation}
\ese
Since $\bth^{-1}\br, \bK^{-1}\bw \in L^2(J;L^2(\Omega))$ we have from \eqref{A:heatflux} and \eqref{A:darcyflux} that $T, p \in L^2(J;H_0^1(\Omega))$. Thus, we can write \eqref{A:heat} and \eqref{A:pressure} in non-mixed form, i.e.
\begin{align}
a_0 \tderiv{T}(t) - b_0 \tderiv{p}(t) + \frac{a_r}{2\beta} \tderiv{\tr{\bsig}}(t) - \etab \cdot \bw(t) - \nabla \cdot (\bth \nabla T(t)) &= h(t), \\
c_0 \tderiv{p}(t) - b_0 \tderiv{T}(t) + \frac{c_r}{2\alpha} \tderiv{\tr{\bsig}}(t) - \nabla \cdot (\bK \nabla p(t)) &= g(t),
\end{align}
and use the theory of linear parabolic equations (see \cite{evans1998partial} p. 349 for details) to get increased regularity for $T(t)$ and $p(t)$, and then use \eqref{A:heatflux} and \eqref{A:darcyflux} to infer increased regularity for $\br(t)$ and $\bw(t)$. In particular, if the domain boundary $\Gamma$ is of class $C^1$, $h,g \in C^1([0,T_f];H^1(\Omega))$, $\bff \in C^2([0,T_f];L^2(\Omega))$ and $T_0 \in H_0^1(\Omega)\cap H^2(\Omega)$, then $T \in H^1(J;H^2(\Omega))$ and thus $\br \in H^1(J;H^1(\Omega))$. Due to the special case of the Sobolev embedding theorem for $d=2$, i.e. $H^1(\Omega) \subset L^\infty(\Omega)$, we get that $\br \in C([0,T_f];L^\infty(\Omega))$. Alternatively, if also $p_0 \in H_0^1(\Omega)\cap H^2(\Omega)$, then we have additionally $\bw \in H^1(J;H^1(\Omega))$, and since $H^1(\Omega) \subset L^4(\Omega)$ (independently of spatial dimension), we get $\br, \bw \in C([0,T_f];L^4(\Omega))$.

\section{Tables}\label{appB}
For easy reference we list some of the notations used in this article.
\begin{table}[h!]
\centering
\begin{tabular}{ ll } 
 \hline
 Parameter & Description \\ 
 \hline \hline 
 $a_0$ & effective thermal capacity \\
 $b_0$ & thermal dilation coefficient \\
 $\beta$ & thermal stress coefficient \\
 $\bK$ & matrix permeability divided by fluid viscosity \\
 $\bth$ & effective thermal conductivity \\
 $\mu, \lambda$ & Lam\'e parameters \\
 $\alpha$ & Biot-Willis constant \\
 $c_0$ & specific storage coefficient \\
 \hline
\end{tabular}
\caption{Description of parameters}
\end{table}

\begin{table}[h!]
\centering
\begin{tabular}{lll} 
\hline
Variable & Description & Spaces\\
\hline \hline 
$T$ & temperature distribution & $\Tcal := L^2(\Omega)$ \\
$\bu$ & solid displacement & $\Ucal := (L^2(\Omega))^d$\\
$p$ & fluid pressure & $\Pcal := L^2(\Omega)$ \\
$\bsig$ & total stress & $\Scal := H_s(\divr;\Omega)$\\
$\bw$ & Darcy flux &$\Wcal := H(\divr;\Omega)$ \\
$\br$ & heat flux & $\Rcal := H(\divr;\Omega)$ \\
\hline
\end{tabular}
\caption{Variables}
\end{table}



\section*{Acknowledgements}
This work forms part of Norwegian Research Council project 250223. The authors would also like to thank Kundan Kumar for very helpful discussions concerning the analysis of the nonlinear model.

\vspace*{-0.2cm}

\enlargethispage{0.2cm}

\bibliographystyle{siamplain}
\bibliography{ref}

\begin{thebibliography}{10}

\bibitem{ahmed:hal-01687026}
{\sc E.~Ahmed, F.~A. Radu, and J.~M. Nordbotten}, {\em {A posteriori error
  estimates and adaptivity for fully mixed finite element discretizations for
  Biot's consolidation model }}.
\newblock working paper or preprint, Jan. 2018,
  \url{https://hal.inria.fr/hal-01687026}.

\bibitem{arnold2007mixed}
{\sc D.~N. Arnold, R.~S. Falk, and R.~Winther}, {\em Mixed finite element
  methods for linear elasticity with weakly imposed symmetry}, Math. Comp., 76
  (2007), pp.~1699--1723,
  \href{http://dx.doi.org/10.1090/S0025-5718-07-01998-9}{doi:\nolinkurl{10.1090/S0025-5718-07-01998-9}},
  \url{https://doi.org/10.1090/S0025-5718-07-01998-9}.

\bibitem{arnold2002mixed}
{\sc D.~N. Arnold and R.~Winther}, {\em Mixed finite elements for elasticity},
  Numer. Math., 92 (2002), pp.~401--419,
  \href{http://dx.doi.org/10.1007/s002110100348}{doi:\nolinkurl{10.1007/s002110100348}},
  \url{https://doi.org/10.1007/s002110100348}.

\bibitem{baerland2017weakly}
{\sc T.~B{\ae}rland, J.~J. Lee, K.-A. Mardal, and R.~Winther}, {\em Weakly
  imposed symmetry and robust preconditioners for {B}iot's consolidation
  model}, Comput. Methods Appl. Math., 17 (2017), pp.~377--396,
  \href{http://dx.doi.org/10.1515/cmam-2017-0016}{doi:\nolinkurl{10.1515/cmam-2017-0016}},
  \url{https://doi.org/10.1515/cmam-2017-0016}.

\bibitem{bause2017space}
{\sc M.~Bause, F.~A. Radu, and U.~K\"ocher}, {\em Space-time finite element
  approximation of the {B}iot poroelasticity system with iterative coupling},
  Comput. Methods Appl. Mech. Engrg., 320 (2017), pp.~745--768,
  \href{http://dx.doi.org/10.1016/j.cma.2017.03.017}{doi:\nolinkurl{10.1016/j.cma.2017.03.017}},
  \url{https://doi.org/10.1016/j.cma.2017.03.017}.

\bibitem{biot1941general}
{\sc M.~A. Biot}, {\em {General theory of three-dimensional consolidation}},
  Journal of Applied Physics, 12 (1941), pp.~155--164.

\bibitem{biot1972theory}
{\sc M.~A. Biot}, {\em {Theory of finite deformations of porous solids}},
  Indiana University Mathematics Journal, 21 (1972), pp.~597--620.

\bibitem{both2017robust}
{\sc J.~W. Both, M.~Borregales, J.~M. Nordbotten, K.~Kumar, and F.~A. Radu},
  {\em Robust fixed stress splitting for {B}iot's equations in heterogeneous
  media}, Appl. Math. Lett., 68 (2017), pp.~101--108,
  \href{http://dx.doi.org/10.1016/j.aml.2016.12.019}{doi:\nolinkurl{10.1016/j.aml.2016.12.019}},
  \url{https://doi.org/10.1016/j.aml.2016.12.019}.

\bibitem{brenan1995numerical}
{\sc K.~E. Brenan, S.~L. Campbell, and L.~R. Petzold}, {\em Numerical solution
  of initial-value problems in differential-algebraic equations}, vol.~14 of
  Classics in Applied Mathematics, Society for Industrial and Applied
  Mathematics (SIAM), Philadelphia, PA, 1996.
\newblock Revised and corrected reprint of the 1989 original.

\bibitem{brun2018thporo}
{\sc M.~K. Brun, I.~Berre, J.~M. Nordbotten, and F.~A. Radu}, {\em Upscaling of
  the coupling of hydromechanical and thermal processes in a quasi-static
  poroelastic medium}, Transport in Porous Media,  (2018),
  \url{https://doi.org/10.1007/s11242-018-1056-8}.

\bibitem{cheney2013analysis}
{\sc W.~Cheney}, {\em Analysis for applied mathematics}, vol.~208 of Graduate
  Texts in Mathematics, Springer-Verlag, New York, 2001,
  \href{http://dx.doi.org/10.1007/978-1-4757-3559-8}{doi:\nolinkurl{10.1007/978-1-4757-3559-8}},
  \url{https://doi.org/10.1007/978-1-4757-3559-8}.

\bibitem{cioranescu2000introduction}
{\sc D.~Cioranescu and P.~Donato}, {\em An introduction to homogenization},
  vol.~17 of Oxford Lecture Series in Mathematics and its Applications, The
  Clarendon Press, Oxford University Press, New York, 1999.

\bibitem{coussy2004poromechanics}
{\sc O.~Coussy}, {\em {Poromechanics}}, John Wiley \& Sons, 2004.

\bibitem{detournay1995fundamentals}
{\sc E.~Detournay and A.~H.-D. Cheng}, {\em Fundamentals of poroelasticity}, in
  Analysis and Design Methods, Elsevier, 1995, pp.~113--171.

\bibitem{evans1998partial}
{\sc L.~C. Evans}, {\em Partial differential equations}, vol.~19 of Graduate
  Studies in Mathematics, American Mathematical Society, Providence, RI,
  second~ed., 2010.

\bibitem{gatmiri1997formulation}
{\sc B.~Gatmiri and P.~Delage}, {\em {A formulation of fully coupled
  thermal--hydraulic--mechanical behaviour of saturated porous
  media---numerical approach}}, International Journal for Numerical and
  Analytical Methods in Geomechanics, 21 (1997), pp.~199--225.

\bibitem{hornung2012homogenization}
{\sc U.~Hornung}, {\em {Homogenization and porous media}}, vol.~6, Springer
  Science \& Business Media, 2012.

\bibitem{hunter1996nonlinear}
{\sc J.~K. Hunter}, {\em Nonlinear evolution equations}, University of
  California, Davis,  (1996).

\bibitem{lee1997thermal}
{\sc C.~K. Lee and C.~C. Mei}, {\em {Thermal consolidation in porous media by
  homogenization theory---I. Derivation of macroscale equations}}, Advances in
  Water Resources, 20 (1997), pp.~127--144.

\bibitem{lee2016robust}
{\sc J.~J. Lee}, {\em Robust error analysis of coupled mixed methods for
  {B}iot's consolidation model}, J. Sci. Comput., 69 (2016), pp.~610--632,
  \href{http://dx.doi.org/10.1007/s10915-016-0210-0}{doi:\nolinkurl{10.1007/s10915-016-0210-0}},
  \url{https://doi.org/10.1007/s10915-016-0210-0}.

\bibitem{lee2017parameter}
{\sc J.~J. Lee, K.-A. Mardal, and R.~Winther}, {\em Parameter-robust
  discretization and preconditioning of {B}iot's consolidation model}, SIAM J.
  Sci. Comput., 39 (2017), pp.~A1--A24,
  \href{http://dx.doi.org/10.1137/15M1029473}{doi:\nolinkurl{10.1137/15M1029473}},
  \url{https://doi.org/10.1137/15M1029473}.

\bibitem{list2016study}
{\sc F.~List and F.~A. Radu}, {\em A study on iterative methods for solving
  {R}ichards' equation}, Comput. Geosci., 20 (2016), pp.~341--353,
  \href{http://dx.doi.org/10.1007/s10596-016-9566-3}{doi:\nolinkurl{10.1007/s10596-016-9566-3}},
  \url{https://doi.org/10.1007/s10596-016-9566-3}.

\bibitem{mikelic2013convergence}
{\sc A.~Mikeli\'c and M.~F. Wheeler}, {\em Convergence of iterative coupling
  for coupled flow and geomechanics}, Comput. Geosci., 17 (2013), pp.~455--461,
  \href{http://dx.doi.org/10.1007/s10596-012-9318-y}{doi:\nolinkurl{10.1007/s10596-012-9318-y}},
  \url{https://doi.org/10.1007/s10596-012-9318-y}.

\bibitem{nedelec1986new}
{\sc J.-C. N\'ed\'elec}, {\em A new family of mixed finite elements in {${\bf
  R}^3$}}, Numer. Math., 50 (1986), pp.~57--81,
  \href{http://dx.doi.org/10.1007/BF01389668}{doi:\nolinkurl{10.1007/BF01389668}},
  \url{https://doi.org/10.1007/BF01389668}.

\bibitem{phillips2008coupling}
{\sc P.~J. Phillips and M.~F. Wheeler}, {\em A coupling of mixed and
  discontinuous {G}alerkin finite-element methods for poroelasticity}, Comput.
  Geosci., 12 (2008), pp.~417--435,
  \href{http://dx.doi.org/10.1007/s10596-008-9082-1}{doi:\nolinkurl{10.1007/s10596-008-9082-1}},
  \url{https://doi.org/10.1007/s10596-008-9082-1}.

\bibitem{pop2004mixed}
{\sc I.~S. Pop, F.~Radu, and P.~Knabner}, {\em Mixed finite elements for the
  {R}ichards' equation: linearization procedure}, J. Comput. Appl. Math., 168
  (2004), pp.~365--373,
  \href{http://dx.doi.org/10.1016/j.cam.2003.04.008}{doi:\nolinkurl{10.1016/j.cam.2003.04.008}},
  \url{https://doi.org/10.1016/j.cam.2003.04.008}.

\bibitem{radu2015robust}
{\sc F.~A. Radu, J.~M. Nordbotten, I.~S. Pop, and K.~Kumar}, {\em A robust
  linearization scheme for finite volume based discretizations for simulation
  of two-phase flow in porous media}, J. Comput. Appl. Math., 289 (2015),
  pp.~134--141,
  \href{http://dx.doi.org/10.1016/j.cam.2015.02.051}{doi:\nolinkurl{10.1016/j.cam.2015.02.051}},
  \url{https://doi.org/10.1016/j.cam.2015.02.051}.

\bibitem{raviart1977mixed}
{\sc P.-A. Raviart and J.~M. Thomas}, {\em A mixed finite element method for
  2nd order elliptic problems},  (1977), pp.~292--315. Lecture Notes in Math.,
  Vol. 606.

\bibitem{showalter2000diffusion}
{\sc R.~E. Showalter}, {\em Diffusion in poro-elastic media}, J. Math. Anal.
  Appl., 251 (2000), pp.~310--340,
  \href{http://dx.doi.org/10.1006/jmaa.2000.7048}{doi:\nolinkurl{10.1006/jmaa.2000.7048}},
  \url{https://doi.org/10.1006/jmaa.2000.7048}.

\bibitem{suvorov2011macroscopic}
{\sc A.~P. Suvorov and A.~P.~S. Selvadurai}, {\em Macroscopic constitutive
  equations of thermo-poroviscoelasticity derived using eigenstrains}, J. Mech.
  Phys. Solids, 58 (2010), pp.~1461--1473,
  \href{http://dx.doi.org/10.1016/j.jmps.2010.07.016}{doi:\nolinkurl{10.1016/j.jmps.2010.07.016}},
  \url{https://doi.org/10.1016/j.jmps.2010.07.016}.

\bibitem{terzaghi1944theoretical}
{\sc K.~Terzaghi}, {\em {Theoretical soil mechanics}}, Chapman And Hali,
  Limited John Wiler And Sons, Inc; New York, 1944.

\bibitem{thomas1977analyse}
{\sc J.~M. Thomas}, {\em M\'ethode des \'el\'ements finis \'equilibre}, in
  Journ\'ees ``\'El\'ements {F}inis''\ ({R}ennes, 1975), Univ. Rennes, Rennes,
  1975, p.~25.

\bibitem{van2017thermoporoelasticity}
{\sc C.~J. Van~Duijn, A.~Mikelic, M.~Wheeler, and T.~Wick}, {\em
  {Thermoporoelasticity via homogenization I. Modeling and formal two-scale
  expansions }}.
\newblock working paper or preprint, Nov. 2017,
  \url{https://hal.archives-ouvertes.fr/hal-01650194}.

\bibitem{van2004crystal}
{\sc C.~J. van Duijn and I.~S. Pop}, {\em Crystal dissolution and precipitation
  in porous media: pore scale analysis}, J. Reine Angew. Math., 577 (2004),
  pp.~171--211,
  \href{http://dx.doi.org/10.1515/crll.2004.2004.577.171}{doi:\nolinkurl{10.1515/crll.2004.2004.577.171}},
  \url{https://doi.org/10.1515/crll.2004.2004.577.171}.

\bibitem{wang2017theory}
{\sc H.~F. Wang}, {\em Theory of linear poroelasticity with applications to
  geomechanics and hydrogeology}, Princeton University Press, 2017.

\bibitem{yi2014convergence}
{\sc S.-Y. Yi}, {\em Convergence analysis of a new mixed finite element method
  for {B}iot's consolidation model}, Numer. Methods Partial Differential
  Equations, 30 (2014), pp.~1189--1210,
  \href{http://dx.doi.org/10.1002/num.21865}{doi:\nolinkurl{10.1002/num.21865}},
  \url{https://doi.org/10.1002/num.21865}.

\bibitem{yosida1995functional}
{\sc K.~Yosida}, {\em Functional analysis}, Classics in Mathematics,
  Springer-Verlag, Berlin, 1995,
  \href{http://dx.doi.org/10.1007/978-3-642-61859-8}{doi:\nolinkurl{10.1007/978-3-642-61859-8}},
  \url{https://doi.org/10.1007/978-3-642-61859-8}.
\newblock Reprint of the sixth (1980) edition.

\end{thebibliography}

\end{document}